\documentclass[12pt]{article}
\usepackage{amssymb}
\usepackage{amsfonts}


\newtheorem{theorem}{Theorem}

\newtheorem{corollary}[theorem]{Corollary}

\newtheorem{example}[theorem]{Example}

\newtheorem{proposition}[theorem]{Proposition}

\newenvironment{proof}[1][Proof]{\noindent\textbf{#1.} }{\ \rule{0.5em}{0.5em}}
\input{tcilatex}
\begin{document}

\title{On subcompactness and countable subcompactness of metrizable spaces
in $\mathbf{ZF}$}
\author{Kyriakos Keremedis}
\maketitle

\begin{abstract}
We show in $\mathbf{ZF}$\ that:\smallskip\ 

(i) Every subcompact metrizable space is completely metrizable, and every
completely metrizable space is countably subcompact.\smallskip

(ii) A metrizable space $\mathbf{X}=(X,T)$ is countably compact iff it is
countably subcompact relative to $T$.\smallskip 

(iii) For every metric space $\mathbf{X}=(X,d),$ the following are
equivalent:\newline
(a) $\mathbf{X}$ is compact;\newline
(b) for every open filter $\mathcal{F}$ of $\mathbf{X},\dbigcap \{\overline{F%
}:F\in \mathcal{F}\}\neq \emptyset $; \newline
(c) $\mathbf{X}$ is subcompact relative to $T$.\ \smallskip 

We also show:\smallskip

(iv) The negation of each of the statements,

(a) every countably subcompact metrizable space is completely metrizable,

(b) every countably subcompact metrizable space is subcompact,

(c) every complete metrizable space is subcompact

\ is relatively consistent with $\mathbf{ZF}$.\smallskip

(v) $\mathbf{AC}$ iff for every family $\{\mathbf{X}_{i}:i\in I\}$ of
metrizable subcompact spaces, for every family $\{\mathcal{B}_{i}:i\in I\}$
such that for every $i\in I$, $\mathcal{B}_{i}$ is a subcompact base for $%
\mathbf{X}_{i}$, the Tychonoff product $\mathbf{X}=\dprod\limits_{i\in I}%
\mathbf{X}_{i}$ is subcompact with respect to the standard base $\mathcal{B}$
of $\mathbf{X}$ generated by the family $\{\mathcal{B}_{i}:i\in I\}$%
.\bigskip\ \newline

\noindent \textit{Mathematics Subject Classification (2000):} \ 03E325,
54D30, 54E35, 54E45, 54E50.\newline
\textit{Keywords}\textbf{:} Axiom of choice, compact, countably compact,
subcompact, countably subcompact and lightly compact metric spaces.
\end{abstract}

\section{Notation and Terminology}

Let $\mathbf{X}=(X,T)$ be a topological space and $\mathcal{H}$ be a
non-empty subset of $\mathcal{P}(X)\backslash \{\emptyset \}$. $\mathcal{H}$
is called a \textit{filterbase} of $X$ iff the intersection of any two
members of $\mathcal{H}$ contains an element of $\mathcal{H}$. A filterbase $%
\mathcal{H}$ of $X$ closed under supersets, i.e., for all $A\in \mathcal{H}$
and $B\in \mathcal{P}(X)$, if $A\subset B$ then $B\in \mathcal{H}$, is
called \textit{filter} of $X$. A filterbase (resp. filter) $\mathcal{H}$ $%
\subseteq T$\ of $X$ is called \textit{open} \textit{filterbase} (resp. 
\textit{open filter}) of $\mathbf{X}$. An open filter $\mathcal{H}$ of $%
\mathbf{X}$ is called \textit{total} iff for every $x\in X$, there exists a
neighborhood $V$ of $x$ such that $X\backslash \overline{V}\in \mathcal{H}$.

Assume $\mathbf{X}$ is $T_{3}$ and $\mathcal{B}$ is an open base of $\mathbf{%
X}$. An open filterbase $\mathcal{F}\subseteq \mathcal{B}$ of $\mathbf{X}$
is called \textit{regular }$\mathcal{B}$\textit{-filterbase} iff for every $%
F\in \mathcal{F}$ there exists $B\in \mathcal{F}$ with $\overline{B}%
\subseteq F$. If a regular $\mathcal{B}$-filterbase $\mathcal{F}$ is
countable, then $\mathcal{F}$ is called a \textit{countable regular} $%
\mathcal{B}$\textit{-filterbase}. In particular, the countable regular $%
\mathcal{B}$-filterbase $\mathcal{F}=\{F_{n}\in \mathcal{B}:n\in \mathbb{N}$%
, $\overline{F_{n+1}}\subseteq F_{n}\}$ is called \textit{a regular} $%
\mathcal{B}$\textit{-sequence}.

Let $\mathcal{U}$ be a family of subsets of $X$. An element $x\in X$ is
called a \textit{cluster} \textit{point} of $\mathcal{U}$ iff every
neighborhood of $x$ meets infinitely many members of $\mathcal{U}$. $%
\mathcal{U}$ is said to be \textit{locally finite} if $\mathcal{U}$ has no
cluster points.\smallskip

$\mathbf{X}$ is said to be \textit{compact} (resp. \textit{countably compact}%
) iff every open cover $\mathcal{U}$ of $\mathbf{X}$ (resp. countable open
cover $\mathcal{U}$ of $\mathbf{X}$) has a finite subcover $\mathcal{V}$.
Equivalently, $\mathbf{X}$ is compact (resp. countably compact) iff the
intersection of every family (resp. countable family) of closed sets of $%
\mathbf{X}$ with the finite intersection property (fip for abbreviation) is
non-empty.\smallskip 

$\mathbf{X}$ is said to be\ \textit{lightly compact} (resp. \textit{%
countably lightly compact}) iff $\mathbf{X}$ has no infinite (resp. no
countably infinite) locally finite families of open subsets.\smallskip\
Light compactness has been introduced in \cite{mp}. Countable light
compactness is condition $(B_{3})$ (: Every pairwise disjoint family $%
\mathcal{U}=\{U_{n}:n\in \mathbb{N}\}$ of non-empty open subsets of $\mathbf{%
X}$ has a cluster point in $\mathbf{X}$) in \cite{ad} and it is equivalent
to light compactness in $\mathbf{ZFC}$ (= Zermelo-Fraenkel set theory $%
\mathbf{ZF}$ together with axiom of choice $\mathbf{AC}$). Lightly compact
spaces are also called \textit{feebly compact}, see e.g. \cite{s}.\smallskip

A $T_{3}$ space $\mathbf{X}$ is called \textit{subcompact} (resp. \textit{%
countably subcompact}) if there exists an open base $\mathcal{B}$ such that
for every regular $\mathcal{B}$-filterbase (resp. for every countable
regular $\mathcal{B}$\textit{-}filterbase), $\dbigcap \{F:F\in \mathcal{F}%
\}\neq \emptyset $. The base $\mathcal{B}$ is called \textit{subcompact}
(resp. \textit{countably subcompact}). Subcompact and countably subcompact
spaces have been introduced and investigated in \cite{dg}.\smallskip

Let $\mathbf{X}=(X,d)$ be a metric space, $x\in X$ and $\varepsilon >0$. $%
B_{d}(x,\varepsilon )=\{y\in X:d(x,y)<\varepsilon \}$ (resp. $%
D_{d}(x,\varepsilon )=\{y\in X:d(x,y)\leq \varepsilon \}$) denotes the open
(resp. closed) ball in $\mathbf{X}$ with center $x$ and radius $\varepsilon $%
. If no confusion is likely to arise we shall omit the subscript $d$ from $%
B_{d}(x,\varepsilon )$ and $D_{d}(x,\varepsilon )$. Given $B\subseteq
X,B\neq \emptyset ,$ 
\[
\delta (B)=\sup \{d(x,y):x,y\in B\}\in \mathbb{R}_{+}\cup \{+\infty \}
\]%
will denote the \textit{diameter} of $B$\smallskip . $T_{d}$ will denote the
topology on $X$ produced by the family of all open discs of $\mathbf{X}$%
.\smallskip\ 

$\mathbf{X}$ is called \textit{subcompact} (resp. \textit{countably
subcompact}) iff the topological space $(X,T_{d})$ is subcompact (resp.
countably subcompact).\smallskip

A sequence of points $(x_{n})_{n\in \mathbb{N}}$ of $\mathbf{X}$ is called 
\textit{Cauchy} iff for every $\varepsilon >0$ there exists $n_{0}\in 
\mathbb{N}$ such that for all $n,m\geq n_{0},d(x_{n},x_{m})<\varepsilon $%
.\smallskip\ 

A metric space $(X,d)$ is said to be \textit{complete} iff every Cauchy
sequence in $X$ converges to some point of $X$.\smallskip

A \textit{completion} of $\mathbf{X}$ is a complete metric space $(Y,\rho )$
together with an isometric map $H:\mathbf{X}\rightarrow \mathbf{Y}$ such
that $\overline{H(X)}=Y$. It is a well-known $\mathbf{ZF}$ result that for
every $x_{0}\in X$ the mapping: 
\[
H:(X,d)\rightarrow (C_{b}(X,\mathbb{R}),\rho ),\text{ }H(x)=f_{x} 
\]%
where, $C_{b}(X,\mathbb{R})$ is the family of all bounded continuous
functions from $X$ to $\mathbb{R},$ $\rho $ is the sup metric ($\rho
(f,g)=\sup \{|f(x)-g(x)|:x\in X\}$) and for every $x\in X,$ $%
f_{x}:X\rightarrow \mathbb{R}$ is the function given by:%
\[
f_{x}(t)=d(x,t)-d(x_{0},t)\text{,} 
\]%
is such an isometric map. Thus, $\mathbf{Y}=(\overline{H(X)},\rho )$ is a
completion of $\mathbf{X}$.\smallskip

A topological space $\mathbf{X}=(X,T)$ is said to be \textit{completely
metrizable,} or \textit{topologically complete} iff there is a metric $%
d:X\times X\rightarrow \lbrack 0,\infty )$ such that $T_{d}=T$ and $(X,d)$
is a complete metric space.\smallskip

Let $X$ be an infinite set. We say that $X$ is \textit{Dedekind infinite}
(resp. \textit{weakly Dedekind infinite}) iff $X$ (resp. $\mathcal{P}(X)$)
has a countably infinite subset. Otherwise, $X$ is called \textit{Dedekind
finite} (resp. \textit{weakly Dedekind finite}).\smallskip

Below we list the weak forms of the axiom of choice we shall use in this
paper.

\begin{itemize}
\item $\mathbf{AC}$ : For every family $\mathcal{A}$ of non-empty sets there
exists a function $f$ such that for all $x\in \mathcal{A}$, $f(x)\in x$.

\item $\mathbf{CAC}$ : $\mathbf{AC}$ restricted to countable families.

\item $\mathbf{IDI}(\mathbb{R})$ : Every infinite subset $\mathbb{R}$ is
Dedekind infinite.

\item $\mathbf{IWDI}$ : Every infinite set is weakly Dedekind infinite.
\end{itemize}

For $\mathbf{ZF}$ models satisfying $\mathbf{CAC},$ $\mathbf{IDI}(\mathbb{R}%
) $, or their negations, we refer the reader to \cite{hr}.

\section{Introduction and some preliminary results}

In this paper, the intended context for reasoning will be $\mathbf{ZF}$. If
a statement is provable in $\mathbf{ZF}$ we will add $(\mathbf{ZF})$ in the
beginning of that statement. Otherwise, there will appear $(\mathbf{ZFC})$.

Most mathematicians are aware if $\mathbf{AC}$ is used in a proof of a
mathematical statement. However, deciding if use of $\mathbf{AC}$ in a
specific proof is unnecessary, or determining the exact portion of $\mathbf{%
AC}$ needed to carry out the proof, is not so obvious. In our opinion,
working in $\mathbf{ZF}$, leads to a better understanding of the various
mathematical notions involved in a proof. We find the following quotation of
Horst Herrlich, expressed in \cite{hher}, quite illuminating and
corroborative to the opinion expressed earlier.

\begin{quote}
Ordinarily topology is dealt with in the setting of $\mathbf{ZFC}$. Although 
$\mathbf{AC}$ is neither evidently true nor evidently false, this adherence
to $\mathbf{AC}$ seems to be based on a general belief that adoption of $%
\mathbf{AC}$ enables topologists to prove more and better theorems. Aside
from the trivial observation that no theorem $\mathbf{T}$ in $\mathbf{ZFC}$
is lost in $\mathbf{ZF}$, it simply turns into the implication $\mathbf{AC}%
\rightarrow \mathbf{T}$, which often enough can be even improved to an
equivalence $\mathbf{WC}\leftrightarrow \mathbf{T}$ for a suitable weak form 
$\mathbf{WC}$ of $\mathbf{AC}$.
\end{quote}

Various topological completeness properties have been invented in order to
generalize the definition of complete metric space to the context of
topologies. de Groot introduced in \cite{dg} two such properties. Namely,
subcompactness and countable subcompactness. The justification for the
introduction of the aforementioned notions, as he points out, is the
validity of the following theorem.

\begin{theorem}
\label{s3} \cite{dg} $(\mathbf{ZFC})$ Let $\mathbf{X}=(X,T)$ be a metrizable
space. The following properties are equivalent:\newline
(i) $\mathbf{X}$ is countably subcompact;\newline
(ii) $\mathbf{X}$ is subcompact;\newline
(iii) $\mathbf{X}$ is topologically complete.
\end{theorem}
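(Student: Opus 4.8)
The plan is to prove the three equivalences in the cycle $(iii)\Rightarrow(ii)\Rightarrow(i)\Rightarrow(iii)$, since $(ii)\Rightarrow(i)$ is immediate from the definitions (a countable regular $\mathcal{B}$-filterbase is in particular a regular $\mathcal{B}$-filterbase, so a subcompact base is automatically countably subcompact). This reduces the real work to showing that topological completeness implies subcompactness, and that countable subcompactness implies topological completeness.

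\medskip

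For the implication $(iii)\Rightarrow(ii)$, I would fix a complete metric $d$ inducing $T$ and take $\mathcal{B}$ to be the base of all open balls. Given a regular $\mathcal{B}$-filterbase $\mathcal{F}$, the defining property lets me, for each $F\in\mathcal{F}$, find $B\in\mathcal{F}$ with $\overline{B}\subseteq F$; iterating this and using the filterbase property (any two members contain a common member) I would extract a decreasing sequence of balls $B_n\in\mathcal{F}$ with $\overline{B_{n+1}}\subseteq B_n$. The key point to arrange is that the radii can be driven to $0$: since each $B_n$ is itself a basic open set, regularity forces the enclosed closure to sit strictly inside, and by choosing members carefully one shrinks the diameters. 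Picking a center $x_n$ of each $B_n$ gives a Cauchy sequence (the nesting $\overline{B_{n+1}}\subseteq B_n$ controls the tail), which converges by completeness to a point $x$ lying in every $\overline{B_n}\subseteq B_{n-1}$, hence in $\bigcap\mathcal{F}$. The delicate step here is guaranteeing the diameters tend to zero in $\mathbf{ZF}$ without invoking choice to select the shrinking subsequence; one must build the sequence by an explicit recursion using the metric (choosing, say, the ball of smallest admissible radius of the form $1/k$), so that no arbitrary choices are made.

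\medskip

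For the harder implication $(i)\Rightarrow(iii)$, I would use the $\mathbf{ZF}$ completion $\mathbf{Y}=(\overline{H(X)},\rho)$ supplied in the preliminaries, where $H$ is the canonical isometric embedding into $C_b(X,\mathbb{R})$. The strategy is to show that if $\mathbf{X}$ is countably subcompact then $H(X)$ is already closed in $\mathbf{Y}$, i.e. $H(X)=\overline{H(X)}$, so that $\mathbf{X}$ is isometric to a complete space and hence topologically complete. To see this, suppose $y\in\overline{H(X)}\setminus H(X)$; then $y$ is a limit of a sequence from $H(X)$, and pulling back along $H$ one obtains in $\mathbf{X}$ a nested sequence of balls whose radii shrink to $0$ but whose intersection is empty (the would-be limit is the missing point $y$). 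With a countably subcompact base $\mathcal{B}$ in hand, I would refine this nested sequence into a genuine countable regular $\mathcal{B}$-sequence $\{F_n\}$ with $\overline{F_{n+1}}\subseteq F_n$ and $\bigcap F_n=\emptyset$, contradicting countable subcompactness.

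\medskip

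The main obstacle I anticipate lies precisely in this last refinement: converting the Cauchy-but-non-convergent sequence into a regular $\mathcal{B}$-filterbase drawn from the prescribed base $\mathcal{B}$, while keeping everything constructive. In $\mathbf{ZFC}$ one would freely choose basic sets interpolating between consecutive balls, but in $\mathbf{ZF}$ one must define the $F_n$ by an explicit recursion from the metric data and the enumeration of the approximating sequence, ensuring both the inclusion $\overline{F_{n+1}}\subseteq F_n$ and membership in $\mathcal{B}$ without any appeal to choice. A secondary subtlety is that the base $\mathcal{B}$ witnessing countable subcompactness need not be the ball base, so one must transfer the metric nesting into $\mathcal{B}$-terms, likely by shrinking each $F_n$ to a basic neighborhood of a chosen point with controlled diameter. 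Getting this recursion to close cleanly, so that the resulting countable regular $\mathcal{B}$-filterbase genuinely has empty intersection, is where the proof will require the most care.
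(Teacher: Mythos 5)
Your reduction to the cycle $(iii)\Rightarrow(ii)\Rightarrow(i)\Rightarrow(iii)$ is a legitimate strategy, and it differs from the paper's route: the paper does not reprove this theorem (it cites \cite{dg}), but its Theorem \ref{t6} supplies $\mathbf{ZF}$ proofs of $(ii)\Rightarrow(iii)$ and $(iii)\Rightarrow(i)$, so the implicit decomposition there is $(ii)\Rightarrow(iii)\Rightarrow(i)$ closed up by de Groot's choice-assisted $(i)\Rightarrow(ii)$. The problem is that both of your nontrivial implications rest on intermediate claims that are actually false.

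For $(iii)\Rightarrow(ii)$: the base of \emph{all} open balls of a complete metric space need not be subcompact, so no care in ``driving the radii to $0$'' can rescue the argument --- there may be no members of small diameter at all. Take $X=\{p_{n}:n\in \mathbb{N}\}$ with $d(p_{n},p_{m})=1+1/\min (n,m)$ for $n\neq m$. This is a complete (uniformly discrete, hence topologically discrete) metric space, and each tail $T_{n}=\{p_{m}:m\geq n\}$ is an open ball, since $T_{n}=B(p_{n},s)$ whenever $1+1/n<s\leq 1+1/(n-1)$. As every subset is clopen, $\overline{T_{n+1}}=T_{n+1}\subseteq T_{n}$, so $\{T_{n}:n\in \mathbb{N}\}$ is a regular filterbase of open balls with $\bigcap \{T_{n}:n\in \mathbb{N}\}=\emptyset $. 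The space is subcompact, but only via a different base (e.g.\ the singletons). This is precisely why the paper's proof of Theorem \ref{t6}(ii) does not use the ball base but a doctored one: radii restricted to the values $1/n$, isolated points represented by singletons, and balls with ambiguous centers modified so the center is recoverable from the set; the case in which the radii do \emph{not} tend to $0$ (case (b) there) then gets its own argument, exploiting that the radii are eventually constant. Your sketch has no counterpart to any of this. (A smaller, fixable gap: even when diameters do tend to $0$, your limit $x$ is only shown to lie in $\bigcap_{n}B_{n}$; to place it in $\bigcap \mathcal{F}$ you must, for each $F\in \mathcal{F}$, take $G\in \mathcal{F}$ with $\overline{G}\subseteq F$, note $B_{n}\cap G\neq \emptyset $ by the filterbase property, and conclude $x\in \overline{G}\subseteq F$.)

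For $(i)\Rightarrow(iii)$: your plan is to show that countable subcompactness forces $H(X)$ to be \emph{closed} in the completion of the fixed metric $d$, i.e.\ that $(X,d)$ itself is complete. This is false, and cannot be true, because countable subcompactness is a topological property while completeness of $d$ is not: $\mathbb{R}$ with $d(x,y)=|\arctan x-\arctan y|$ carries the usual topology, hence is completely metrizable and therefore countably subcompact (Theorem \ref{t6}(ii)), yet its completion adds the two points $\pm \pi /2$, so $H(X)$ is dense and not closed. Your intended contradiction evaporates exactly at the step you flag as a ``secondary subtlety'': the shrinking regular sequence converging to the missing point --- e.g.\ $F_{n}=(n,\infty )$, which satisfies $\overline{F_{n+1}}\subseteq F_{n}$ and $\bigcap \{F_{n}:n\in \mathbb{N}\}=\emptyset $ --- need not consist of members of the witnessing base $\mathcal{B}$, and in general it cannot be transferred into $\mathcal{B}$-terms, since $\mathcal{B}$ is designed to exclude such sequences. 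The correct target, as in \cite{dg} and in the paper's Theorem \ref{t6}(i), is that $X$ is a $G_{\delta }$ set in its completion (for countable subcompactness this is where de Groot needs a fragment of choice, to select recursively a countable regular $\mathcal{B}$-sequence of sets containing a given $y\in \bigcap \{U_{n}:n\in \mathbb{N}\}$ with diameters tending to $0$); one then invokes Theorem \ref{s0000} to conclude complete metrizability. Aiming at ``closed'' instead of ``$G_{\delta }$'' is both stronger than needed and unattainable.
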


Regarding Theorem \ref{s3}, it is straightforward to see that the
implication $(ii)\rightarrow (i)$ holds true in $\mathbf{ZF}$, but the
status of the remaining implications is unknown. The proofs given in \cite%
{dg} require some weak forms of the axiom of choice, such as $\mathbf{CAC}$,
in several places. The following web of implications\TEXTsymbol{\backslash}%
non-implications summarizes the established relations, in this project,
between the notions sited in Theorem \ref{s3}.\medskip \bigskip

\begin{center}
\frame{$%
\begin{array}{ccccc}
&  & \frame{$\mathbf{Subcompact}$} &  &  \\ 
&  &  &  &  \\ 
& \text{$\not\nearrow $}\swarrow &  & \searrow \text{$\not\nwarrow $} &  \\ 
&  &  &  &  \\ 
\begin{array}{c}
\\ 
\frame{$%
\begin{array}{c}
\mathbf{Countably} \\ 
\mathbf{subcompact}%
\end{array}%
$}%
\end{array}
&  & 
\begin{array}{c}
\nrightarrow \\ 
\leftarrow%
\end{array}
&  & 
\begin{array}{c}
\\ 
\frame{$%
\begin{array}{c}
\mathbf{Completely} \\ 
\mathbf{metrizable}%
\end{array}%
$}%
\end{array}%
\end{array}%
$}\bigskip
\end{center}

In particular, in Theorem \ref{t6} we show that $(ii)\rightarrow (iii)$ and $%
(iii)\rightarrow (i)$ are valid in $\mathbf{ZF}$ and, in Theorem \ref{t3},
we show that each of the following non-implications $(i)\nrightarrow
(ii),(i)\nrightarrow (iii)$ and $(iii)\nrightarrow (ii)$\ is consistent with 
$\mathbf{ZF}$.\smallskip 

de Groot has established in \cite{dg} p. 762 the following $\mathbf{ZFC}$
characterization of compact Hausdorff spaces:

\begin{itemize}
\item $(D)$ A Hausdorff space $\mathbf{X}=(X,T)$ is compact iff it is
Tychonoff and subcompact relative to $T$.
\end{itemize}

As expected, $(D)$ is not a theorem of $\mathbf{ZF}$. It is consistent with $%
\mathbf{ZF}$ the existence of non-Tychonoff compact Hausdorff spaces, see
e.g., Example 2.4 p. 81 in \cite{gt}. As a by-product of $(D),$ if we
restrict to the class of metrizable spaces, we get the following
characterization of compactness:

\begin{itemize}
\item $(A)$ A metrizable space $\mathbf{X}=(X,T)$ is compact iff it is
subcompact relative to $T$.
\end{itemize}

\noindent Since there are complete, non-compact metric spaces, it follows
from Theorem \ref{s3} and $(A)$ that the notion \textquotedblleft subcompact
with respect to the base of all open sets\textquotedblright\ is strictly
stronger than subcompactness. The most natural question which pops up at
this point is\medskip\ 

\textbf{Question 1}. Is $(A)$ a theorem of $\mathbf{ZF}$?\medskip

Of course a compact metrizable space is subcompact with respect to any base
of open sets. So, Question 1 actually concerns the converse of $(A)$. In the
forthcoming Theorem \ref{t1} we show, in $\mathbf{ZF}$, that a metrizable
space is countably compact iff it is countably subcompact with respect to
the base of all open sets. Since a countably compact metrizable space is
compact in $\mathbf{ZF}+\mathbf{CAC}$, see e.g., \cite{kn}, it follows that $%
(A)$ is a theorem of $\mathbf{ZF}+\mathbf{CAC}$. In Theorem \ref{t9} we
answer Question 1 in the affirmative. So, subcompactness of metrizable
spaces with respect to the family of all open sets is the strongest of all
forms of compactness of metrizable spaces in $\mathbf{ZF}$, see e.g. \cite%
{kkerem}. 

The next theorem is from \cite{dg} and concerns products of subcompact
spaces. It shows, in $\mathbf{ZFC}$, that subcompactness is an invariant for
the forming of topological products.

\begin{theorem}
\label{R1} $(\mathbf{ZFC})$ \cite{dg} Let $\{\mathbf{X}_{i}=(X_{i},T_{i}):i%
\in I\}$ be a family of subcompact $T_{3}$ spaces, $\mathcal{B}=\{\mathcal{B}%
_{i}:i\in I\}$ a family of sets such that for every $i\in I$, $\mathcal{B}%
_{i}$ is a subcompact base for $\mathbf{X}_{i}$, and $\mathbf{X}%
=\dprod\limits_{i\in I}\mathbf{X}_{i}$ be their product. Then $\mathbf{X}$
is subcompact with respect to the standard base $\mathcal{C}$ generated by
the family $\mathcal{B}$.
\end{theorem}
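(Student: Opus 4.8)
The plan is to establish subcompactness of the product by reducing, coordinate by coordinate, to the subcompactness of the factors $\mathbf{X}_i$ and then reassembling a single point of the product using $\mathbf{AC}$. Recall that a typical member of the standard base $\mathcal{C}$ has the form $F=\prod_{j\in I}U_{j}^{F}$, where there is a finite set $J_{F}\subseteq I$ with $U_{j}^{F}\in \mathcal{B}_{j}$ for $j\in J_{F}$ and $U_{j}^{F}=X_{j}$ for $j\notin J_{F}$; write $\pi _{i}$ for the $i$-th projection, so $\pi _{i}(F)=U_{i}^{F}$. Let $\mathcal{F}\subseteq \mathcal{C}$ be an arbitrary regular $\mathcal{C}$-filterbase; I must show $\bigcap \{F:F\in \mathcal{F}\}\neq \emptyset $. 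For each $i\in I$ I would set $\mathcal{F}_{i}=\{\pi _{i}(F):F\in \mathcal{F}\text{ and }\pi _{i}(F)\in \mathcal{B}_{i}\}$, discarding the free coordinates where $\pi _{i}(F)=X_{i}$.

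First I would check that, whenever $\mathcal{F}_{i}\neq \emptyset $, it is a regular $\mathcal{B}_{i}$-filterbase. The filterbase property is immediate from that of $\mathcal{F}$: given $\pi _{i}(F_{1}),\pi _{i}(F_{2})\in \mathcal{F}_{i}$, choose $F_{3}\in \mathcal{F}$ with $F_{3}\subseteq F_{1}\cap F_{2}$; then $\pi _{i}(F_{3})\subseteq \pi _{i}(F_{1})\cap \pi _{i}(F_{2})$, and since $\pi _{i}(F_{3})\subseteq \pi _{i}(F_{1})\in \mathcal{B}_{i}$ the set $\pi _{i}(F_{3})$ is a non-free coordinate, hence lies in $\mathcal{B}_{i}$ and so in $\mathcal{F}_{i}$. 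For regularity, fix $F\in \mathcal{F}$ with $\pi _{i}(F)\in \mathcal{B}_{i}$ and use regularity of $\mathcal{F}$ to get $G\in \mathcal{F}$ with $\overline{G}\subseteq F$. Writing $G=\prod_{j}V_{j}$, the key observation is the inclusion $\prod_{j}\overline{V_{j}}\subseteq \overline{G}$, which holds in $\mathbf{ZF}$ precisely because $G$ has finite support: verifying that a point of $\prod_{j}\overline{V_{j}}$ lies in $\overline{G}$ only requires meeting a basic neighborhood in finitely many coordinates, so no infinite choice is involved. Projecting $\prod_{j}\overline{V_{j}}\subseteq \overline{G}\subseteq F$ to the $i$-th coordinate yields $\overline{\pi _{i}(G)}=\overline{V_{i}}\subseteq \pi _{i}(F)$; a short check shows $\pi _{i}(G)\in \mathcal{B}_{i}$, so that $\pi _{i}(G)\in \mathcal{F}_{i}$ witnesses the required regularity.

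Since $\mathcal{B}_{i}$ is a subcompact base, each nonempty $\mathcal{F}_{i}$ satisfies $\bigcap \mathcal{F}_{i}\neq \emptyset $. I would then invoke $\mathbf{AC}$ to choose, for every $i\in I$, a point $x_{i}\in \bigcap \mathcal{F}_{i}$ when $\mathcal{F}_{i}\neq \emptyset $ and an arbitrary $x_{i}\in X_{i}$ otherwise, and set $x=(x_{i})_{i\in I}$. To finish, take any $F\in \mathcal{F}$: for $j$ outside the finite support $J_{F}$ we have $\pi _{j}(F)=X_{j}\ni x_{j}$ trivially, while for $j\in J_{F}$ we have $\pi _{j}(F)\in \mathcal{B}_{j}$, hence $\pi _{j}(F)\in \mathcal{F}_{j}$ and $x_{j}\in \bigcap \mathcal{F}_{j}\subseteq \pi _{j}(F)$; therefore $x\in F$, and as $F$ was arbitrary, $x\in \bigcap \{F:F\in \mathcal{F}\}$. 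The main obstacle, and the only essential use of choice, is the simultaneous selection of the points $x_{i}$ across the unrestricted index set $I$; this is exactly where full $\mathbf{AC}$ enters, which is why the statement is a $\mathbf{ZFC}$ theorem rather than a $\mathbf{ZF}$ one, whereas all the coordinatewise verifications above go through in $\mathbf{ZF}$.
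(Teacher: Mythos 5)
Your proof is correct. Note, however, that the paper contains no proof of Theorem \ref{R1} at all: the result is simply quoted from de Groot \cite{dg} and is used later only for the implication (i) $\rightarrow$ (ii) of Theorem \ref{R2}, so there is no internal argument to compare yours against. Your argument is the natural one (and, in essence, de Groot's): project the regular $\mathcal{C}$-filterbase to each factor, verify that each nonempty projection $\mathcal{F}_{i}$ is a regular $\mathcal{B}_{i}$-filterbase, apply subcompactness of the factors, and invoke $\mathbf{AC}$ to select one point per coordinate. Two small points deserve tightening. First, the $\mathbf{ZF}$-validity of $\prod_{j}\overline{V_{j}}\subseteq \overline{G}$ rests not only on the finite support of basic neighborhoods but also on the nonemptiness of $G$ (guaranteed because members of a filterbase are nonempty): a fixed point of $G$ serves as the template which one modifies in finitely many coordinates to meet a given basic neighborhood. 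Second, where you claim $\pi_{i}(F_{3})\in \mathcal{B}_{i}$ and $\pi_{i}(G)\in \mathcal{B}_{i}$, these projections could in fact be free, i.e.\ equal to $X_{i}$; but then the containments $\pi_{i}(F_{3})\subseteq \pi_{i}(F_{1})$ and $\overline{\pi_{i}(G)}\subseteq \pi_{i}(F)$ force $\pi_{i}(F_{1})=X_{i}$, respectively $\pi_{i}(F)=X_{i}$, so that $X_{i}\in \mathcal{B}_{i}$ and the conclusion still holds --- worth saying explicitly. Finally, your diagnosis that the simultaneous selection of the points $x_{i}$ is the only essential use of choice is corroborated by the paper itself: Theorem \ref{R2} shows that even the metrizable case of the product statement implies $\mathbf{AC}$, so no proof can avoid it.
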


The question which arises now is whether Theorem \ref{R1} holds in $\mathbf{%
ZF}$. We show in the forthcoming Theorem \ref{R2} that the answer, as
expected, is in the negative.\medskip

We list the following known results here for future reference.

\begin{theorem}
\label{s1} \cite{nike} $(\mathbf{ZF})$ Let $\mathbf{X}=(X,T)$ be a
topological space. The following are equivalent: \newline
(i) $\mathbf{X}$ is countably lightly compact;\newline
(ii) Every countable open filterbase $\mathcal{F}$ of $\mathbf{X}$ has a
point of adherence $(\dbigcap \{\overline{F}:F\in \mathcal{F}\}\neq
\emptyset )$;\newline
(iii) $\mathbf{X}$ has no countably infinite pairwise disjoint locally
finite family of open sets.\newline
\end{theorem}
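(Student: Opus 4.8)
The plan is to establish the three equivalences through the cycle $(i)\Rightarrow(iii)\Rightarrow(ii)\Rightarrow(i)$, taking care throughout that every construction is carried out in $\mathbf{ZF}$ without an appeal to countable choice. The implication $(i)\Rightarrow(iii)$ is immediate: a countably infinite pairwise disjoint locally finite family of open sets is in particular a countably infinite locally finite family, so if no family of the latter kind exists then none of the former kind does.

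For $(ii)\Rightarrow(i)$ I would argue contrapositively. Suppose $\{U_n:n\in\mathbb{N}\}$ is a countably infinite locally finite family of nonempty open sets, and set $V_n=\bigcup_{k\geq n}U_k$. Each $V_n$ is open and nonempty, and since $V_n\cap V_m=V_{\max(n,m)}$ the family $\{V_n:n\in\mathbb{N}\}$ is a countable open filterbase. I claim its adherence is empty: if $x\in\bigcap_n\overline{V_n}$, then for every neighborhood $W$ of $x$ and every $n$ we have $W\cap V_n\neq\emptyset$, so $W$ meets some $U_k$ with $k\geq n$; letting $n$ vary, $W$ meets $U_k$ for infinitely many $k$, making $x$ a cluster point of $\{U_n\}$ and contradicting local finiteness. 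Thus $\{V_n\}$ is a countable open filterbase with $\bigcap_n\overline{V_n}=\emptyset$, so $(ii)$ fails.

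The substantive direction is $(iii)\Rightarrow(ii)$, again argued contrapositively. Let $\mathcal{F}=\{F_n:n\in\mathbb{N}\}$ be an enumerated countable open filterbase with $\bigcap\{\overline{F}:F\in\mathcal{F}\}=\emptyset$ (the finite case is vacuous, having a smallest member with nonempty closure). First I would thin $\mathcal{F}$ to a decreasing sequence, and this is exactly where $\mathbf{ZF}$ must be respected: rather than choosing a witness inside each intersection, I would set $H_0=F_0$ and, given $H_n=F_{j_n}$, let $H_{n+1}=F_j$ where $j$ is the \emph{least} index with $F_j\subseteq H_n\cap F_{n+1}$ (such $j$ exists by the filterbase property). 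This recursion is definable from the fixed enumeration, so no choice is used, and it gives $H_{n+1}\subseteq H_n\cap F_{n+1}$, whence $\bigcap_n\overline{H_n}\subseteq\bigcap_n\overline{F_n}=\emptyset$. Now put $W_n=H_n\setminus\overline{H_{n+1}}$. These open sets are pairwise disjoint (for $m<n$ one has $W_n\subseteq H_{m+1}\subseteq\overline{H_{m+1}}$ while $W_m\cap\overline{H_{m+1}}=\emptyset$), and infinitely many are nonempty: if $W_n=\emptyset$ for all large $n$ then $\overline{H_n}$ would be eventually constant and nonempty, contradicting $\bigcap_n\overline{H_n}=\emptyset$. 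The index set $S=\{n:W_n\neq\emptyset\}$ is an infinite subset of $\mathbb{N}$, hence canonically countably infinite, so $\{W_n:n\in S\}$ is a genuine countably infinite pairwise disjoint family of open sets. It is locally finite: a cluster point $x$ would meet $W_n$, and hence $H_n$, for arbitrarily large $n$, forcing $x\in\bigcap_n\overline{H_n}=\emptyset$. This contradicts $(iii)$.

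The main obstacle, as flagged above, is not the topology but the bookkeeping: both the passage from the filterbase to a decreasing sequence and the extraction of the nonempty $W_n$ must be made canonical so that the argument does not secretly invoke $\mathbf{CAC}$. Using the given enumeration to select least-index witnesses, together with the fact that an infinite subset of $\mathbb{N}$ carries a canonical bijection with $\mathbb{N}$, removes every choice; the remaining verifications (the filterbase axioms, the disjointness, and the cluster-point computations) are then routine.
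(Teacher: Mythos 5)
Your proposal is correct, but there is nothing in the paper to compare it against: Theorem \ref{s1} is stated there as a known result imported from \cite{nike}, and the paper supplies no proof of its own. Judged independently, your cycle $(i)\Rightarrow(iii)\Rightarrow(ii)\Rightarrow(i)$ is a complete and genuinely choice-free argument. The two places where $\mathbf{CAC}$ could have crept in are handled properly: the decreasing sequence $(H_n)$ comes from a least-index recursion relative to one fixed enumeration of $\mathcal{F}$ (fixing a single bijection of a countably infinite set with $\mathbb{N}$ is a single choice, which $\mathbf{ZF}$ permits), and the passage from the infinite index set $S=\{n:W_n\neq\emptyset\}$ to a countably infinite family uses only the canonical increasing enumeration of an infinite subset of $\mathbb{N}$; note too that the sets $W_n$, $n\in S$, are automatically distinct, being non-empty and pairwise disjoint, so the family really is countably infinite as a set. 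The remaining verifications are all sound: $W_m\cap W_n=\emptyset$ for $m<n$ because $W_n\subseteq H_{m+1}\subseteq\overline{H_{m+1}}$; the eventual-constancy argument correctly shows $S$ is infinite, since $W_n=\emptyset$ forces $\overline{H_n}=\overline{H_{n+1}}$ and a member of a filterbase is non-empty; and both cluster-point computations (in $(ii)\Rightarrow(i)$ via $V_n=\bigcup_{k\geq n}U_k$, and in $(iii)\Rightarrow(ii)$ via $W_n\subseteq H_m$ for $n\geq m$) are exactly right, given the paper's definition of locally finite as ``has no cluster points.'' The dismissal of finite filterbases (a finite filterbase has a minimum member, whose closure lies in the adherence) is also correct, so the contrapositive of $(iii)\Rightarrow(ii)$ legitimately assumes an infinite enumerated filterbase.
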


\begin{theorem}
\label{s2} \cite{kker} $(\mathbf{ZF})$ A topological space is countably
compact iff\ is countably lightly compact.
\end{theorem}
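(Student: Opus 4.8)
The plan is to route both directions through Theorem~\ref{s1}, which lets me replace ``countably lightly compact'' by the more tractable condition that every countable open filterbase $\mathcal{F}$ of $\mathbf{X}$ has a point of adherence, i.e. $\bigcap\{\overline{F}:F\in\mathcal{F}\}\neq\emptyset$. Paired with the finite-intersection-property formulation of countable compactness recalled in the Notation section (namely, $\mathbf{X}$ is countably compact iff every countable family of closed sets with the fip has nonempty intersection), the theorem becomes a comparison between an adherence condition for countable families of \emph{closed} sets and the same condition for countable \emph{open} filterbases.

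First I would prove the implication from countable compactness to countable light compactness, which is the routine direction and uses no choice. Let $\mathcal{F}=\{F_{n}:n\in\mathbb{N}\}$ be a countable open filterbase. Since $\mathcal{F}$ is a filterbase, every finite subfamily has nonempty intersection, and because $F_{n_{0}}\cap\cdots\cap F_{n_{k}}\subseteq\overline{F_{n_{0}}}\cap\cdots\cap\overline{F_{n_{k}}}$, the countable family of closed sets $\{\overline{F_{n}}:n\in\mathbb{N}\}$ inherits the fip. Countable compactness then yields $\bigcap_{n}\overline{F_{n}}\neq\emptyset$, a point of adherence, and by Theorem~\ref{s1} this gives countable light compactness.

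The substance is in the converse. Here I would argue by contraposition: assuming $\mathbf{X}$ is not countably compact, fix a countable family of closed sets $\{C_{n}\}$ with the fip but $\bigcap_{n}C_{n}=\emptyset$, pass to the decreasing family $D_{n}=C_{0}\cap\cdots\cap C_{n}$ of nonempty closed sets with empty intersection, and try to manufacture from it either a countable open filterbase with no adherence point or a countably infinite pairwise disjoint locally finite family of open sets, either of which contradicts Theorem~\ref{s1}. The naive move---replacing each $D_{n}$ by the open set $X\setminus\overline{W_{n}}$, where $W_{n}=X\setminus D_{n}$ is the associated increasing open cover---can collapse, since $W_{n}$ may be dense, and then $X\setminus\overline{W_{n}}=\emptyset$; thus the complements of the closed data need not supply any usable open family.

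This collapse is exactly the main obstacle, and overcoming it is where the real work lies: from the failure of $\{W_{n}\}$ to have a finite subcover one must extract genuine nonempty open sets that do not accumulate, rather than merely the empty open complements of closures. The route I would take is to separate $D_{n+1}$ from $X\setminus D_{n}$ by open sets and thereby thin the cover into an honest pairwise disjoint locally finite family of open sets; the delicate point is that this shrinking rests on a separation property of $\mathbf{X}$ (supplied, in the metrizable setting of this paper, by normality) and must be carried out in $\mathbf{ZF}$ without invoking a choice principle when the separating sets are selected. I expect this closed-to-open conversion, and its faithful execution without choice, to be the hard part of the argument.
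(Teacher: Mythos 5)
First, note that the paper itself contains no proof of Theorem \ref{s2}: it is imported from \cite{kker} (a paper whose subject is \emph{metric} spaces) as a known result, so your attempt has to stand on its own. Your forward direction (countably compact $\Rightarrow$ countably lightly compact) is correct and choice-free. The converse, however, is only a plan: you yourself flag the ``closed-to-open conversion'' as the hard part and leave it unexecuted, and that deferred step is the entire theorem, not a detail. Worse, at the level of generality at which the statement is phrased the gap cannot be closed at all: for arbitrary topological spaces the converse is false already in $\mathbf{ZFC}$. Mr\'owka's $\Psi$-space over a maximal almost disjoint family (or $\beta \mathbb{N}\backslash \{p\}$ with $p\in \beta \mathbb{N}\backslash \mathbb{N}$) is feebly compact, hence has no countably infinite locally finite family of open sets, yet it is not countably compact. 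Since any $\mathbf{ZF}$ theorem is in particular a $\mathbf{ZFC}$ theorem, no $\mathbf{ZF}$ proof of the literal statement exists; metrizability (present in the cited source, and in the paper's only use of this result, Theorem \ref{t1}) is an essential hypothesis, not a convenience. Your hedge that the needed separation is ``supplied, in the metrizable setting of this paper, by normality'' is exactly the point where your argument silently abandons the stated generality; and even granting normality, the Urysohn/Tietze-type separation you would invoke is built by countably many arbitrary choices of open sets, so it is itself unavailable in $\mathbf{ZF}$ --- only a metric makes the separation canonical.

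In the metric setting the step you left open has a short, choice-free completion, and it is simpler than manufacturing a pairwise disjoint locally finite family. Suppose $(X,d)$ is not countably compact and take, as in your reduction, non-empty closed sets $D_{0}\supseteq D_{1}\supseteq \cdots $ with $\bigcap_{n}D_{n}=\emptyset $. Put $F_{n}=\{x\in X:d(x,D_{n})<1/n\}$. Each $F_{n}$ is open and non-empty (since $D_{n}\subseteq F_{n}$), and $F_{n+1}\subseteq F_{n}$ because $D_{n+1}\subseteq D_{n}$ and $1/(n+1)<1/n$; hence $\{F_{n}:n\in \mathbb{N}\}$ is a countable open filterbase. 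Moreover $\overline{F_{n}}\subseteq \{x:d(x,D_{n})\leq 1/n\}$, so any $x\in \bigcap_{n}\overline{F_{n}}$ satisfies $d(x,D_{m})\leq 1/n$ for all $n\geq m$, whence $d(x,D_{m})=0$ and $x\in D_{m}$ for every $m$, contradicting $\bigcap_{n}D_{n}=\emptyset $. Thus $\{F_{n}:n\in \mathbb{N}\}$ has no point of adherence, so by Theorem \ref{s1} the space is not countably lightly compact. No choice principle enters because the metric provides all the separating open sets uniformly from the data $D_{n}$ --- precisely the mechanism your sketch was missing.
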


\begin{theorem}
\label{s0} \cite{yi} $(\mathbf{ZF})$ Let $\mathbf{X}=(X,T)$ be a topological
space and $\mathcal{B}$ be an open base of $\mathbf{X}$. Then, $\mathbf{X}$
is countably $\mathcal{B}$-subcompact iff every regular $\mathcal{B}$%
-sequence has a non-empty intersection.
\end{theorem}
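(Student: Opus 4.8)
The forward implication is immediate from the definitions: a regular $\mathcal{B}$-sequence $\{F_{n}:n\in \mathbb{N},\ \overline{F_{n+1}}\subseteq F_{n}\}$ is in particular a countable regular $\mathcal{B}$-filterbase, so if $\mathbf{X}$ is countably $\mathcal{B}$-subcompact then every such sequence has non-empty intersection. The substance of the theorem is the converse, and the plan is to show that non-emptiness of intersections of regular $\mathcal{B}$-sequences forces non-emptiness of intersections of \emph{arbitrary} countable regular $\mathcal{B}$-filterbases, by extracting from any such filterbase a regular $\mathcal{B}$-sequence whose intersection is contained in that of the filterbase.

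So assume every regular $\mathcal{B}$-sequence has non-empty intersection, and let $\mathcal{F}$ be a countable regular $\mathcal{B}$-filterbase. If $\mathcal{F}$ is finite the claim is trivial, so we may fix an enumeration $\mathcal{F}=\{F_{n}:n\in \mathbb{N}\}$. I would construct a sequence $(G_{n})_{n\in \mathbb{N}}$ of members of $\mathcal{F}$ recursively as follows. Put $G_{0}=F_{0}$. Given $G_{n}\in \mathcal{F}$, use the filterbase property to find a member of $\mathcal{F}$ contained in $G_{n}\cap F_{n+1}$, and let $H_{n}$ be the one of least index in the enumeration; then invoke the regularity of $\mathcal{F}$ to find a member $B\in \mathcal{F}$ with $\overline{B}\subseteq H_{n}$, and let $G_{n+1}$ be the one of least index. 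By construction $\overline{G_{n+1}}\subseteq H_{n}\subseteq G_{n}\cap F_{n+1}$, so $(G_{n})$ is a regular $\mathcal{B}$-sequence, and a straightforward induction gives $G_{n}\subseteq F_{i}$ for every $i\leq n$.

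It then follows that $\bigcap_{n}G_{n}\subseteq \bigcap_{n}F_{n}$: for each fixed $i$ and each $n\geq i$ we have $G_{n}\subseteq F_{i}$, whence $\bigcap_{n}G_{n}\subseteq F_{i}$ for all $i$. By the hypothesis applied to the regular $\mathcal{B}$-sequence $(G_{n})$, the left-hand side is non-empty, and therefore so is $\bigcap \mathcal{F}$; this is exactly countable $\mathcal{B}$-subcompactness.

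The one point requiring genuine care—and the reason the statement is worth recording as a $\mathbf{ZF}$ result—is that the recursion must be carried out without any appeal to choice. The main obstacle is precisely to make the two selections at each step canonical: because $\mathcal{F}$ comes with a fixed enumeration, ``the member of least index with the stated property'' is a well-defined function of $G_{n}$, so the sequence $(G_{n})$ is defined by an honest recursion and no choice function over $\mathcal{F}$ is needed. Had one instead selected members of an un-enumerated filterbase, one would be implicitly using a (countable) choice principle, which is exactly what must be avoided here.
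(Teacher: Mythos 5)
Your proof is correct, but there is nothing in the paper to compare it against: the paper states this result (Theorem \ref{s0}) only as a quoted known fact, citing Ikeda's paper \cite{yi}, and gives no proof of it. Your argument is the standard one and it is sound in $\mathbf{ZF}$: the forward direction is indeed immediate since a regular $\mathcal{B}$-sequence is by definition a countable regular $\mathcal{B}$-filterbase, and for the converse your recursion is well founded because, once a single enumeration of the countable filterbase $\mathcal{F}$ is fixed (one choice, harmless in $\mathbf{ZF}$), both selections at each step --- the member $H_{n}\in\mathcal{F}$ with $H_{n}\subseteq G_{n}\cap F_{n+1}$ and the member $G_{n+1}\in\mathcal{F}$ with $\overline{G_{n+1}}\subseteq H_{n}$ --- are made canonical by taking least indices, so the recursion theorem applies with no appeal to $\mathbf{CAC}$. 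The verification that $\{G_{n}:n\in\mathbb{N}\}$ is a regular $\mathcal{B}$-sequence with $\dbigcap_{n}G_{n}\subseteq\dbigcap_{n}F_{n}$ is exactly as you say (the sets are nested, so they form a filterbase, and $\overline{G_{n+1}}\subseteq G_{n}$ gives regularity; the inclusion $G_{n}\subseteq F_{i}$ for $i\leq n$ follows by your induction). Two trivial boundary cases you handle or could mention in passing: if $\mathcal{F}$ is finite, the filterbase property already yields a member of $\mathcal{F}$ inside the total intersection; and if the sets $G_{n}$ happen to repeat so that only finitely many are distinct, the nestedness makes $\dbigcap_{n}G_{n}$ equal to the smallest of them, which is non-empty without invoking the hypothesis. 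Neither affects the correctness of your argument.
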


\begin{theorem}
\label{s0000} $(\mathbf{ZF})$ \cite{w} A $G_{\delta }$ subspace of a
completely metrizable space is completely metrizable.
\end{theorem}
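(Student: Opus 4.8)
The plan is to carry out a choice-free version of Alexandrov's classical construction, reducing everything to explicitly defined metrics so that no selection is ever required. I would fix a complete metric $d$ on $X$ with $T_{d}=T$ (guaranteed by complete metrizability) and fix one representation $Y=\bigcap_{n\in \mathbb{N}}U_{n}$ of the $G_{\delta }$ set $Y$ as a countable intersection of $T$-open sets; these are single existential instantiations, legitimate in $\mathbf{ZF}$. Discarding the indices with $U_{n}=X$ (if none remain then $Y=X$ and $d$ itself witnesses the claim), I may assume each complement $X\backslash U_{n}$ is non-empty, so that for $x\in Y$ the quantity $g_{n}(x)=1/d(x,X\backslash U_{n})$ is a well-defined positive real, the denominator being strictly positive because $U_{n}$ is open and $x\in U_{n}$.

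Next I would define on $Y$ the metric
\[
\rho(x,y)=d(x,y)+\sum_{n\in \mathbb{N}}\frac{1}{2^{n}}\cdot \frac{|g_{n}(x)-g_{n}(y)|}{1+|g_{n}(x)-g_{n}(y)|},
\]
where the bounded summand and the weights $2^{-n}$ guarantee that the series converges (each term is $\leq 2^{-n}$), so $\rho $ is finite; symmetry is clear and the triangle inequality follows from that of $d$ together with the fact that $(a,b)\mapsto |a-b|/(1+|a-b|)$ is a metric on $\mathbb{R}$. Since $\rho \geq d$ on $Y$, the identity map $(Y,\rho )\rightarrow (Y,d)$ is continuous. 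For the reverse direction I would fix $x_{0}\in Y$ and $\varepsilon >0$, choose $N$ with $\sum_{n>N}2^{-n}<\varepsilon $, and use the continuity of each $g_{n}$ on $U_{n}$ together with the Weierstrass bound on the tail to show that small $d$-perturbations of $x_{0}$ produce small $\rho $-perturbations; this involves only finitely many continuity statements and a uniform tail estimate, hence no choice. Consequently $T_{\rho }$ coincides with the subspace topology $T_{d}|_{Y}=T|_{Y}$.

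The heart of the argument is completeness of $(Y,\rho )$. Let $(x_{k})_{k}$ be a $\rho $-Cauchy sequence. Then it is $d$-Cauchy, so by completeness of $(X,d)$ it converges to a unique point $x\in X$; uniqueness of metric limits is exactly what makes this step choice-free. For each fixed $n$ the $\rho $-Cauchy condition forces $|g_{n}(x_{k})-g_{n}(x_{l})|\rightarrow 0$, so $(g_{n}(x_{k}))_{k}$ is a bounded sequence of reals, say $g_{n}(x_{k})\leq M_{n}$; hence $d(x_{k},X\backslash U_{n})\geq 1/M_{n}>0$ for all $k$, and letting $k\rightarrow \infty $ (continuity of $z\mapsto d(z,X\backslash U_{n})$) gives $d(x,X\backslash U_{n})\geq 1/M_{n}>0$, i.e. $x\in U_{n}$. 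As this holds for every $n$, we get $x\in \bigcap_{n}U_{n}=Y$. Finally $g_{n}(x_{k})\rightarrow g_{n}(x)$ for each $n$ by continuity, and splitting $\rho (x_{k},x)$ into a finite head and a uniformly small tail shows $\rho (x_{k},x)\rightarrow 0$, so $(x_{k})$ converges in $(Y,\rho )$.

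I would expect the main obstacle to be purely a matter of bookkeeping in the verification that $T_{\rho }=T|_{Y}$ and, above all, in guarding against a hidden appeal to $\mathbf{AC}$: the construction is designed precisely so that every object produced (the limit $x$, the values $g_{n}(x_{k})$, the bound $M_{n}$) is canonically determined, so that fixing $d$ and the open representation $(U_{n})$ at the outset is the only non-constructive step, and it is a single instantiation permissible in $\mathbf{ZF}$. An alternative I would keep in reserve is the diagonal embedding of $Y$ into the countable product $\prod_{n}(U_{n},\rho _{n})$, where $\rho _{n}$ is the open-set metric on $U_{n}$; countable products of complete metric spaces are complete in $\mathbf{ZF}$ (again because coordinatewise limits are unique) and the image of $Y$ is closed, but this route needs the same open-set lemma plus an extra closedness check, so the single-metric construction above is cleaner.
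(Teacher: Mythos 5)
Your proof is correct, and it is genuinely choice-free: every object it produces (the metric $\rho$, the limit $x$ of a Cauchy sequence, the bounds $M_{n}$, which one can take canonically as $M_{n}=\sup_{k}g_{n}(x_{k})$) is explicitly defined from the two data you instantiate once at the start, namely the complete metric $d$ and the representation $Y=\bigcap_{n}U_{n}$. It does, however, take a different route from the paper. The paper gives no proof of Theorem \ref{s0000} at all: the result is listed among known facts with a citation to Willard, and the paper's only indication of a $\mathbf{ZF}$ proof is Remark 1, which says to adapt the proof of Theorem \ref{t3}(ii),(iii). That argument is an embedding argument: the $G_{\delta }$ set is mapped by $x\mapsto (x,1/d(x,U_{1}^{c}),1/d(x,U_{2}^{c}),\dots )$ homeomorphically onto a \emph{closed} subspace of the completely metrizable space $\mathbb{R}^{\omega }$ (in the general setting, of a product of $X$ with $\mathbb{R}^{\omega }$), and complete metrizability passes to closed subspaces. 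Your main construction instead re-metrizes $Y$ in place, folding the very same functions $g_{n}=1/d(\cdot ,U_{n}^{c})$ into the convergent series defining $\rho$ --- the classical Alexandrov construction; indeed, the ``reserve'' alternative you mention is essentially the paper's suggested route. What your packaging buys is self-containedness: no product metrics, no verification that an image is closed, and completeness is checked by a direct Cauchy argument on $Y$ itself. What the paper's packaging buys is economy within the paper: the embedding, its continuity in both directions, and the closedness of the image are already written out in full in the proof of Theorem \ref{t3}, so the general theorem is obtained by reusing work the paper needs anyway. Both hinge on the same analytic fact, that membership in $U_{n}$ is detected by finiteness of $1/d(\cdot ,U_{n}^{c})$ along limits, and both avoid $\mathbf{AC}$ for the same reason: limits in metric spaces and suprema of bounded sets of reals are unique, so no selections are ever made.
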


\begin{theorem}
\label{s000} \cite{br} If there exists a Dedekind finite subset of $\mathbb{R%
}$ then there exists a dense one also.
\end{theorem}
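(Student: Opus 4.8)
The plan is to reduce to a bounded set and then to transport \emph{entire intervals}, rather than individual points, so that no choice is needed. First I would compose with a fixed order-homeomorphism $g:\mathbb{R}\to(0,1)$; since $g$ is a bijection it preserves both infiniteness and Dedekind finiteness, so it suffices to produce a dense Dedekind finite subset of $\mathbb{R}$ out of an infinite Dedekind finite set $A\subseteq(0,1)$ (density in $(0,1)$ transfers to density in $\mathbb{R}$ through $g^{-1}$). Being infinite and bounded, $A$ has an accumulation point $p\in[0,1]$, and crucially this can be produced in $\mathbf{ZF}$ with no choice at all: repeatedly bisect, always keeping the left half if it still meets $A$ in an infinite set and the right half otherwise. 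This rule is definable, so it determines a \emph{unique} nested sequence of intervals of vanishing length, each meeting $A$ in an infinite set, whose intersection is the desired point $p$.

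Next I would fix the canonical enumeration $\{I_k:k\in\mathbb{N}\}$ of all open intervals with rational endpoints (available since $\mathbb{Q}$ is well-orderable) together with the punctured neighbourhood annuli $R_n=\{x:2^{-n-1}\le|x-p|<2^{-n}\}$ around $p$. Because $p$ is an accumulation point of $A$, points of $A$ come arbitrarily close to $p$, so the index set $N=\{n:A\cap R_n\neq\emptyset\}$ is infinite; being a subset of $\mathbb{N}$ it carries its canonical increasing enumeration $n_0<n_1<\cdots$, again with no appeal to choice. I would then discard the rest of $A$ and work with the subset $A''=A\cap\bigcup_{n}R_n$, which is an infinite subset of the Dedekind finite set $A$, hence itself infinite and Dedekind finite.

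The heart of the construction is an injection $f$, defined on $\bigcup_{n}R_n$, that sends the $k$-th non-empty annulus $R_{n_k}$ into the $k$-th basic interval. I would first choose, recursively and canonically, pairwise disjoint rational subintervals $I_k'\subseteq I_k$ of length $<2^{-k}$: at stage $k$ take the least (in the fixed enumeration) rational interval of small length lying in $I_k$ and missing the finitely many previously chosen $\overline{I_j'}$, which is possible since finitely many tiny intervals cannot cover $I_k$. Then define $f$ on $R_{n_k}$ by any explicit affine injection of its two component intervals into the left and right thirds of $I_k'$. Since the targets $I_k'$ are pairwise disjoint and $f$ is injective on each annulus, $f$ is injective on $A''$; hence $D=f(A'')$ is an infinite Dedekind finite set, being an injective image of one. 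Finally $D$ is dense, because for every $k$ the set $A\cap R_{n_k}$ is non-empty, so $f(A\cap R_{n_k})\subseteq I_k'\subseteq I_k$ is a non-empty part of $D$ meeting $I_k$, and the $I_k$ form a base of $\mathbb{R}$.

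The point I would flag as the main obstacle is that the obvious route to density — picking one element of $A$ inside each of countably many target regions to witness that $D$ meets every basic interval — secretly uses countable choice and would, moreover, immediately manufacture a countably infinite subset, destroying Dedekind finiteness. The construction circumvents this precisely by transporting whole annuli rather than chosen witnesses: the mere non-emptiness of $A\cap R_{n_k}$, which needs no selection, already forces $D$ to meet $I_k$. Keeping every remaining decision definable — the bisection rule, the enumerations of the $\mathbb{Q}$-intervals and of $N$, and the least-index choice of the $I_k'$ — is what makes the whole argument go through in $\mathbf{ZF}$.
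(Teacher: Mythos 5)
Your overall strategy --- reduce to a bounded set, extract an accumulation point by a definable bisection, split a punctured neighbourhood of it into the disjoint annuli $R_n$, and transport whole annuli rather than chosen witnesses --- is sound, and those parts are correct in $\mathbf{ZF}$. (Note that the paper gives no proof of this theorem; it cites Brunner, so your attempt has to stand on its own.) However, the step you call the heart of the construction is not merely unjustified: it is impossible. You ask for pairwise disjoint nonempty rational intervals $I_k'\subseteq I_k$, one inside \emph{every} member $I_k$ of the canonical base of $\mathbb{R}$. No such family can exist: $I_0'$ is a nonempty open set, so it entirely contains some basic interval $I_m$ with $m\neq 0$; then $I_m'\subseteq I_m\subseteq I_0'$, contradicting disjointness. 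Correspondingly, your justification ``finitely many tiny intervals cannot cover $I_k$'' is false: a single previously chosen interval $I_j'$ (of length up to $1$) can contain a short $I_k$ outright, and at that stage your greedy recursion has no candidate left to pick, i.e., it provably gets stuck at some finite stage. Shrinking the lengths $2^{-k}$ does not help; the obstruction is intrinsic, since a pairwise disjoint family of nonempty open sets can never have a member inside every basic interval of a space without isolated points.

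The flaw is local and repairable: the receiving sets must be nowhere dense yet still of continuum size, so replace the intervals $I_k'$ by pairwise disjoint Cantor sets $C_k\subseteq I_k$. These \emph{can} be produced by a definable recursion: at stage $k$ the union $C_0\cup \dots \cup C_{k-1}$ is closed and nowhere dense, hence $I_k\setminus (C_0\cup \dots \cup C_{k-1})$ is a nonempty open set; take the least rational interval inside it (in your fixed enumeration) and place in it the standard middle-thirds Cantor set. Each annulus $R_{n_k}$ injects into $C_k$ definably (an affine map into $[0,1)$, followed by the non-eventually-$1$ binary expansion into $2^{\mathbb{N}}$, followed by the canonical embedding of $2^{\mathbb{N}}$ onto $C_k$). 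With these targets your own argument goes through verbatim: $f=\bigcup_k f_k$ is injective because the $C_k$ are pairwise disjoint and each $f_k$ is injective, so $D=f(A'')$ is Dedekind finite as an injective image of a Dedekind finite set, and $D$ is dense because $A\cap R_{n_k}\neq\emptyset$ forces $D\cap C_k\neq\emptyset$ with $C_k\subseteq I_k$. In short: transporting annuli instead of choosing witnesses is the right idea, but insisting that the targets be intervals makes the construction collapse; they must be Cantor sets (or some other pairwise disjoint, nowhere dense sets of full cardinality).
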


The following result shows that for regular spaces their total filters
coincide with those whose intersections of the closures of their members are
non-empty.

\begin{proposition}
\label{p0} $(\mathbf{ZF})$\ An open filter $\mathcal{F}$ of a topological
space $\mathbf{X}=(X,T)$ is total iff $\dbigcap \{\overline{F}:F\in \mathcal{%
F}\}=\emptyset $.
\end{proposition}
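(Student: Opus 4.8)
The plan is to prove the two implications separately, using only the duality between closure and interior under complementation, so that no form of choice enters. Throughout I use the elementary $\mathbf{ZF}$ facts that for any $A\subseteq X$ one has $X\setminus \overline{A}=\mathrm{int}(X\setminus A)$ and $X\setminus \mathrm{int}(A)=\overline{X\setminus A}$, together with the observation that every open set $F$ satisfies $F\subseteq \mathrm{int}(\overline{F})$.

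For the forward direction, assume $\mathcal{F}$ is total and fix an arbitrary $x\in X$. First I would invoke totality to obtain a neighborhood $V$ of $x$ with $F:=X\setminus \overline{V}\in \mathcal{F}$, and choose an open set $U$ with $x\in U\subseteq V$. Since $U\subseteq V\subseteq \overline{V}$, the open neighborhood $U$ of $x$ is disjoint from $F=X\setminus \overline{V}$, whence $x\notin \overline{F}$. As $x$ was arbitrary, no point of $X$ lies in every $\overline{F}$, that is, $\bigcap \{\overline{F}:F\in \mathcal{F}\}=\emptyset $.

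For the converse, assume $\bigcap \{\overline{F}:F\in \mathcal{F}\}=\emptyset $ and again fix $x\in X$. Then there is some $F\in \mathcal{F}$ with $x\notin \overline{F}$, so $V:=X\setminus \overline{F}$ is an open neighborhood of $x$. The crux is to check that $X\setminus \overline{V}\in \mathcal{F}$. A short computation with the identities above gives $X\setminus \overline{V}=\mathrm{int}(\overline{F})$, and this set contains the open set $F$; since $\mathcal{F}$ is a filter and hence closed under supersets, $\mathrm{int}(\overline{F})\in \mathcal{F}$. Thus $V$ witnesses totality at $x$, and as $x$ was arbitrary $\mathcal{F}$ is total.

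I do not anticipate a genuine obstacle: the entire argument is a bookkeeping exercise in the interior--closure calculus and invokes nothing beyond $\mathbf{ZF}$, so the result holds for an arbitrary topological space. The only point requiring care is the converse, where one must correctly evaluate $X\setminus \overline{V}=\mathrm{int}(\overline{F})$ and then use closure under supersets to deduce membership in $\mathcal{F}$; it is precisely the filter hypothesis, as opposed to merely a filterbase, that makes this step go through.
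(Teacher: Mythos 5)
Your proof is correct, and on the nontrivial direction it takes a genuinely different, and more general, route than the paper. The forward implication is essentially identical in both (the paper dismisses it as straightforward). For the converse, however, the paper's proof invokes regularity of $\mathbf{X}$: given $x\notin \overline{F}$, regularity yields a neighborhood $V$ of $x$ with $\overline{V}\subseteq X\setminus \overline{F}$, whence $F\subseteq \overline{F}\subseteq X\setminus \overline{V}$ and so $X\setminus \overline{V}\in \mathcal{F}$. (Indeed, the sentence introducing Proposition \ref{p0} speaks of regular spaces, though the hypothesis was omitted from the statement itself.) You instead take $V=X\setminus \overline{F}$ and compute $X\setminus \overline{V}=\mathrm{int}(\overline{F})\supseteq F$; equivalently, $\overline{V}=\overline{X\setminus \overline{F}}\subseteq \overline{X\setminus F}=X\setminus F$ because $X\setminus F$ is closed. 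Closure of the open filter under open supersets then finishes the argument. This is valid $\mathbf{ZF}$ reasoning with no appeal to any separation axiom, so you prove the statement exactly as written, for an arbitrary topological space, whereas the paper's argument only establishes it for regular ones.

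This extra generality is worth dwelling on, because the paper claims, immediately after the proposition, that the converse implication fails for non-regular spaces, offering Example \ref{e1} as a witness; your proof shows that claim cannot stand, and the example is in fact defective. In that example the sets $F_{k}=\bigcup \{\{n\}\times \omega :n\geq k\}$ are not open: a basic neighborhood of a point $(n,0)$ must contain $\{n\}\times C$ with $C$ cofinite in $\mathbb{Z}$, and no such set lies inside $\{n\}\times \omega $. Hence $F_{k}\notin \mathcal{F}$, and moreover every open $F\supseteq F_{k}$ contains, for each $n\geq k$, cofinitely many points of $\{n\}\times \{-i:i\in \mathbb{N}\}$; since every neighborhood of $\infty $ also contains cofinitely many points of such columns for all large $n$, every neighborhood of $\infty $ meets $F$, so $\infty \in \bigcap \{\overline{F}:F\in \mathcal{F}\}\neq \emptyset $. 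Thus the example never satisfies the hypothesis of the converse at all, and your regularity-free proof stands.
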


\begin{proof}
($\rightarrow $) This is straightforward.

($\leftarrow $) Let $\mathcal{F}$ be an open filter of $\mathbf{X}$ with $%
\dbigcap \{\overline{F}:F\in \mathcal{F}\}=\emptyset $. Then, for every $%
x\in X$, there is $F\in \mathcal{F}$ with $x\in \overline{F}^{c}$. Since $%
\mathbf{X}$ is regular, there exist a neighborhood $V$ of $x$ with $%
\overline{V}\subseteq \overline{F}^{c}$. Therefore, $F\subseteq \overline{F}%
\subseteq \overline{V}^{c}$ and $\overline{V}^{c}\in \mathcal{F}$, meaning
that $\mathcal{F}$ is total.\medskip 
\end{proof}

We point out here that for every topological space $\mathbf{X}=(X,T)$ if $%
\mathcal{F}$ is a total filter of $\mathbf{X}$ then $\dbigcap \{\overline{F}%
:F\in \mathcal{F}\}=\emptyset $. The following example shows that the
converse of Proposition \ref{p0} is not true in case $\mathbf{X}$ is not
regular. 

\begin{example}
\label{e1} $(\mathbf{ZF})$ Let $T_{\mathbb{Z}}$ be the topology on the set
of all integers $\mathbb{Z}$ in which every point of $\mathbb{Z}\backslash
\{0\}$ is isolated while neighborhoods of $0$ are all cofinite subsets of $%
\mathbb{Z}$ including $0$. Let $\mathbf{Y}$ be the product of the discrete
space $\mathbb{N}\ $with $(\mathbb{Z},T_{\mathbb{Z}})$ and $X=\{\infty
\}\cup \mathbb{N}\times \mathbb{Z}$. Topologize $X$ by declaring
neighborhoods of $\mathbb{N}\times \mathbb{Z}$ to be the old ones whereas
basic neighborhoods of $\infty $ are all sets of the form $\{\infty \}\cup O$
where, $O$ is a subset of $\mathbb{N}\times \{-i:i\in \mathbb{N}\}$ such
that for all but finitely many $n\in \mathbb{N},O$ contains all but finitely
many members of the $n$-th copy of the set of negative integers, i.e.%
\[
|\{n\}\times \{-i:i\in \mathbb{N}\}\backslash O|<\aleph _{0}\text{.}
\]%
We leave it as an easy exercise for the reader to verify that $\mathbf{X}$
is a (non-compact) Hausdorff, non-regular space (the closed set $%
\{(0,n):n\in \mathbb{N}\}$ of $\mathbf{X}$ and the point $\infty \in X$
cannot be separated by open sets). Clearly, for every $k\in \mathbb{N},$%
\[
F_{k}=\dbigcup \{\{n\}\times \omega :n\geq k\}
\]%
is a clopen subset of $\mathbf{X}\ $with 
\[
\dbigcap \{\overline{F_{k}}:k\in \mathbb{N}\}=\dbigcap \{F_{k}:k\in \mathbb{N%
}\}=\emptyset \text{.}
\]%
Let $\mathcal{F}$ be the open filter of $\mathbf{X}$ generated by the open
filterbase $\{F_{k}:k\in \mathbb{N}\}$ of $\mathbf{X}$, i.e. 
\[
\mathcal{F}=\{F\subseteq X:F\text{ is open in }\mathbf{X}\text{ and for some 
}k\in \mathbb{N},F_{k}\subseteq F\}\text{.}
\]%
Clearly, $\dbigcap \{\overline{F}:F\in \mathcal{F}\}=\emptyset $. We claim
that for every neighborhood $V_{\infty }$ of $\infty $ and every $k\in 
\mathbb{N},\overline{V_{\infty }}^{c}\nsupseteq F_{k}$. To this end, fix $%
t\in \mathbb{N}$ such that for all $n\geq t,|\{n\}\times \{-i:i\in \mathbb{N}%
\}\backslash V_{\infty }|<\aleph _{0}$. Then, for all $n\geq t$ and for all $%
n\in \mathbb{N},(n,0)\in \overline{V_{\infty }}\cap F_{k}\neq \emptyset $
meaning that $F_{k}\nsubseteq \overline{V_{\infty }}^{c}$. Hence, for every $%
F\in \mathcal{F},\overline{V_{\infty }}\cap F\neq \emptyset $. So, $%
\overline{V_{\infty }}^{c}\nsupseteq F$ and $\mathcal{F}$ is not total
filter.\medskip 
\end{example}

Clearly, an infinite $T_{1}$ space has open filters with empty intersection,
e.g., the open filter generated by the family of all cofinite sets. We show
next that no compact regular space has total filters. In fact, a regular
space is compact iff it has no total filters.

\begin{theorem}
\label{t8}$\mathbf{(ZF)}$ A regular space $\mathbf{X}=(X,T)$ is compact iff
it has no total filters.
\end{theorem}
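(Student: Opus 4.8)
The plan is to prove the two directions separately, using Proposition~\ref{p0} to translate the condition ``no total filters'' into the statement that every open filter has non-empty adherence $\dbigcap\{\overline{F}:F\in\mathcal{F}\}\neq\emptyset$. This reformulation is the key: once we have it, compactness of a regular space becomes exactly the assertion that every open filter clusters, which is the open-filter analogue of the finite-intersection-property characterization of compactness.

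For the forward direction, I would assume $\mathbf{X}$ is compact regular and show it has no total filter. Suppose toward a contradiction that $\mathcal{F}$ is a total open filter. By Proposition~\ref{p0}, totality gives $\dbigcap\{\overline{F}:F\in\mathcal{F}\}=\emptyset$, i.e. $\{\overline{F}:F\in\mathcal{F}\}$ is a family of closed sets with empty intersection. Since $\mathcal{F}$ is a filter (closed under finite intersections, as any two members of a filter have their intersection in the filter), the family of closures has the finite intersection property: for $F_1,F_2\in\mathcal{F}$ we have $F_1\cap F_2\in\mathcal{F}$, so $\emptyset\neq\overline{F_1\cap F_2}\subseteq\overline{F_1}\cap\overline{F_2}$, and these are non-empty because filter members are non-empty. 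Compactness then forces $\dbigcap\{\overline{F}:F\in\mathcal{F}\}\neq\emptyset$, the desired contradiction. This direction uses no choice and only the fip-characterization of compactness quoted in the preliminaries.

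For the converse, I would assume $\mathbf{X}$ has no total filter and deduce compactness via the fip-characterization. Let $\mathcal{C}$ be a family of closed sets with the finite intersection property; I must show $\dbigcap\mathcal{C}\neq\emptyset$. The natural move is to pass to the complements $\{C^{c}:C\in\mathcal{C}\}$, which are open, and generate from their finite intersections an open filterbase, hence an open filter $\mathcal{F}$. Assuming $\dbigcap\mathcal{C}=\emptyset$ toward a contradiction means $\{C^{c}\}$ covers $X$; I then need to argue that this $\mathcal{F}$ is total, contradicting the hypothesis. Concretely, for each $x\in X$ pick $C\in\mathcal{C}$ with $x\notin C$, so $x\in C^{c}$ which is open; by regularity there is a neighborhood $V$ of $x$ with $\overline{V}\subseteq C^{c}$, equivalently $C\subseteq\overline{V}^{c}$, so $X\setminus\overline{V}\supseteq C^{c}$-complement analysis places an appropriate set into $\mathcal{F}$, witnessing totality at $x$.

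The main obstacle I anticipate is choice-freeness in the converse: the map $x\mapsto(C,V)$ appears to require selecting, for each point, both a set $C\in\mathcal{C}$ missing $x$ and a witnessing neighborhood $V$, which looks like an appeal to $\mathbf{AC}$. The resolution is that totality is an \emph{existential} statement---``for every $x$ there \emph{exists} a neighborhood $V$''---so no simultaneous choice function is needed; one only asserts the existence pointwise, which is legitimate in $\mathbf{ZF}$. The other delicate point is to confirm that the open filter $\mathcal{F}$ generated by the complements genuinely has empty adherence precisely when $\dbigcap\mathcal{C}=\emptyset$, so that Proposition~\ref{p0} can be invoked to conclude $\mathcal{F}$ is total; verifying $\dbigcap\{\overline{F}:F\in\mathcal{F}\}=\emptyset$ reduces, by regularity, to the covering property just established. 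I would structure the argument so that Proposition~\ref{p0} carries the regularity weight in both directions, keeping the compactness side purely at the level of closed sets with the finite intersection property.
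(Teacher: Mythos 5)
Your forward direction is correct and is essentially the paper's own argument: since a filter is closed under finite intersections, the closures of its members have the fip, so compactness yields a point of $\bigcap\{\overline{F}:F\in\mathcal{F}\}$, which totality (the easy direction of Proposition~\ref{p0}, valid in any space) forbids.

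The backward direction, however, has a genuine gap at the very first construction. You pass from a closed family $\mathcal{C}$ with the fip to ``the open filter $\mathcal{F}$ generated by finite intersections of the complements $C^{c}$,'' but this object need not exist, because those finite intersections can be empty. The fip of $\mathcal{C}$ dualizes to the statement that finite \emph{unions} of the sets $C^{c}$ are proper subsets of $X$ (i.e.\ the cover $\{C^{c}:C\in\mathcal{C}\}$ has no finite subcover); it says nothing about finite \emph{intersections} of the $C^{c}$'s. Concretely, in $X=[0,1]$ take $\mathcal{C}=\{[0,1/2],[1/2,1]\}$: the fip holds, yet $C_{1}^{c}\cap C_{2}^{c}=\emptyset$, so no filter can contain both complements. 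Moreover, even where your filter exists, the totality check points the wrong way: regularity gives a neighborhood $V$ of $x$ with $C\subseteq X\setminus\overline{V}$, but to get $X\setminus\overline{V}\in\mathcal{F}$ you would need $X\setminus\overline{V}$ to \emph{contain a finite intersection of the generators} $C_{i}^{c}$, which does not follow from its containing $C$. The paper repairs exactly this point with a regularity refinement: the filter is generated not by the sets $C^{c}$ but by the sets $\overline{V}^{c}$, where $V$ ranges over \emph{all} open sets with $\overline{V}\subseteq C^{c}$ for some $C\in\mathcal{C}$ (in cover language, $\mathcal{V}=\{V\in T:\overline{V}\subseteq U\ \text{for some}\ U\in\mathcal{U}\}$ with $\mathcal{U}=\{C^{c}:C\in\mathcal{C}\}$). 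For this family the fip does follow from the fip of $\mathcal{C}$, since $\overline{V_{1}}^{c}\cap\dots\cap\overline{V_{n}}^{c}=\emptyset$ would force $X=\overline{V_{1}}\cup\dots\cup\overline{V_{n}}\subseteq C_{1}^{c}\cup\dots\cup C_{n}^{c}$, i.e.\ $C_{1}\cap\dots\cap C_{n}=\emptyset$; and totality becomes immediate, because for each $x$ the witnessing set $\overline{V}^{c}$ is itself one of the generators, hence lies in $\mathcal{F}$. Your instinct that no choice is needed (totality being a pointwise existential statement) is right; the flaw is not about $\mathbf{AC}$ but about which family can legitimately generate the filter.
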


\begin{proof}
Fix a regular space $\mathbf{X}=(X,T)$.

($\rightarrow $) We show that $\mathbf{X}$ has no total filters. Assume the
contrary and let $\mathcal{F}$ be a total filter of $\mathbf{X}$. Fix, by
the compactness of $\mathbf{X},x\in \dbigcap \{\overline{F}:F\in \mathcal{F}%
\}$. By our hypothesis, there exists a neighborhood $V$ of $x$ such that $%
\overline{V}^{c}\in \mathcal{F}$. Since $\overline{\overline{V}^{c}}%
\subseteq V^{c},$ it follows that $x\notin \dbigcap \{\overline{F}:F\in 
\mathcal{F}\}$. Contradiction!

($\leftarrow $) We show that $\mathbf{X}$ is compact. To this end, we assume
the contrary and fix an open cover $\mathcal{U}$ of $\mathbf{X}$ without a
finite subcover. Let $\mathcal{V}=\{V\in T:\overline{V}\subseteq U$ for some 
$U\in \mathcal{U}\}$. By the regularity of $\mathbf{X}$, it follows easily
that $\mathcal{V}$ is an open cover of $\mathbf{X}$ such that the closed
cover $\{\overline{V}:V\in \mathcal{V}\}$ of $\mathbf{X}$ has no finite
subcover. It is easy to see that $\{\overline{V}^{c}:V\in \mathcal{V}\}$ is
a family of open sets of $\mathbf{X}$ with the fip. Let $\mathcal{F}$ be the
open filter of $\mathbf{X}$ generated by $\{\overline{V}^{c}:V\in \mathcal{V}%
\}$. Since $\mathcal{V}$ covers $X$, it follows that for every $x\in X$,
there is a $V\in \mathcal{V}$ with $x\in V$ and $\overline{V}^{c}\in 
\mathcal{F}$, meaning that $\mathcal{F}$ is total and contradicting our
hypothesis. Therefore, $\mathbf{X}$ is compact as required 
\end{proof}

\section{Main results}

Our first result in this section shows, in $\mathbf{ZF}$, that in the class
of all metrizable spaces, countable subcompactness relative to the base of
all open sets is equivalent to countable compactness, as well as to
countable light compactness.

\begin{theorem}
\label{t1}$(\mathbf{ZF})$ Let $\mathbf{X}=(X,T)$ be a metrizable space and $%
d $ be a metric on $X$ with $T=T_{d}$. The following are equivalent:\newline
(i) $\mathbf{X}$ is countably subcompact with respect to $T_{d}$;\newline
(ii) $\mathbf{X}$ is countably lightly compact;\newline
(iii) $\mathbf{X}$ is countably compact.
\end{theorem}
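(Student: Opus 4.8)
The plan is to prove the cycle $(iii)\to(ii)\to(i)\to(iii)$, leaning on the two characterization theorems already available. The equivalence $(ii)\leftrightarrow(iii)$ is in fact immediate: Theorem \ref{s2} asserts in $\mathbf{ZF}$ that any topological space is countably compact iff it is countably lightly compact, so there is nothing to do for that link beyond invoking it. Thus the real content is the equivalence of countable subcompactness (relative to the full topology $T_d$) with countable light compactness, and I would organize the argument around Theorem \ref{s0}, which reduces countable $\mathcal{B}$-subcompactness to the statement that every regular $\mathcal{B}$-sequence has non-empty intersection, and Theorem \ref{s1}, which characterizes countable light compactness via the adherence of countable open filterbases.

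For $(ii)\to(i)$, assume $\mathbf{X}$ is countably lightly compact and take $\mathcal{B}=T_d$, the base of all open sets. By Theorem \ref{s0} it suffices to show every regular $T_d$-sequence $\{F_n:n\in\mathbb{N}\}$ (with $\overline{F_{n+1}}\subseteq F_n$) has $\bigcap_n F_n\neq\emptyset$. Such a sequence is in particular a countable open filterbase, so Theorem \ref{s1}(ii) gives a point $x\in\bigcap_n\overline{F_n}$. The regularity condition then upgrades adherence to membership: since $\overline{F_{n+1}}\subseteq F_n$ for every $n$, the point $x\in\overline{F_{n+1}}$ already lies in $F_n$, so $x\in\bigcap_n F_n$. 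This telescoping observation is the key trick and it needs no choice, because the single point $x$ is extracted once from Theorem \ref{s1} and then verified to lie in every $F_n$.

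For $(i)\to(ii)$, I would argue contrapositively in the spirit of Theorem \ref{s1}(iii): suppose $\mathbf{X}$ is \emph{not} countably lightly compact, so it has a countably infinite locally finite family of open sets; by passing to a pairwise disjoint subfamily (available from Theorem \ref{s1}) I get $\{U_n:n\in\mathbb{N}\}$, pairwise disjoint, open, nonempty, and locally finite. From this I want to manufacture a regular $T_d$-sequence with empty intersection, contradicting countable subcompactness via Theorem \ref{s0}. The natural construction is to form nested open sets $W_n=\bigcup_{k\geq n}U_k'$ where $U_k'$ is a slightly shrunk open piece of $U_k$ (using the metric to pick, say, an open ball inside $U_k$ whose closure sits inside $U_k$), arranged so that $\overline{W_{n+1}}\subseteq W_n$; local finiteness guarantees $\overline{W_n}=\bigcup_{k\geq n}\overline{U_k'}$ and hence $\bigcap_n W_n=\emptyset$. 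The main obstacle, and the place where the metrizability is genuinely used, is carrying out the shrinking uniformly without invoking choice: because the $U_n$ are pairwise disjoint and indexed by $\mathbb{N}$, I can use the metric $d$ to make a canonical choice (e.g. the ball centered at a distinguished point with a radius determined by distance to the complement), so that the whole sequence $\{W_n\}$ is defined by an explicit $\mathbf{ZF}$ formula. I expect the delicate point to be verifying that $\overline{W_{n+1}}\subseteq W_n$ rather than merely $W_{n+1}\subseteq W_n$; this is exactly where local finiteness of the family must be exploited to control the closure of an infinite union, and getting that closure computation right in $\mathbf{ZF}$ is the crux of the proof.
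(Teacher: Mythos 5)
Your handling of (ii)$\leftrightarrow$(iii) via Theorem \ref{s2} and your proof of (ii)$\rightarrow$(i) are essentially the paper's own arguments (the paper works with an arbitrary countable regular filterbase rather than reducing to regular sequences by Theorem \ref{s0}, but the adherence-to-membership upgrade is identical). The genuine gap is in (i)$\rightarrow$(ii): the construction you describe does not produce a regular $T_d$-sequence. With a one-shot shrinking $U_k'$ of each $U_k$, the sets $W_n=\bigcup_{k\geq n}U_k'$ fail the condition $\overline{W_{n+1}}\subseteq W_n$: for $k\geq n+1$ one has $\overline{U_k'}\subseteq U_k$, but the only part of $U_k$ contained in $W_n$ is $U_k'$ itself (the $U_j$ are pairwise disjoint, so $W_n\cap U_k=U_k'$); hence every boundary point of $U_k'$ lying inside $U_k$ belongs to $\overline{W_{n+1}}\setminus W_n$. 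Concretely, in $X=\mathbb{R}$ take $U_k=(k,k+1)$ and $U_k'=(k+1/4,k+3/4)$: then $k+1/4\in\overline{W_{n+1}}\setminus W_n$ for every $k\geq n+1$, so no arrangement of a single shrinking can work. The difficulty is therefore not, as you suggest, the closure-of-an-infinite-union computation --- the identity $\overline{W_n}=\bigcup_{k\geq n}\overline{U_k'}$ is exactly what local finiteness gives you, choice-free --- but the fact that one level of shrinking cannot witness regularity at every level $n$.

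The missing idea, which is the heart of the paper's proof, is to iterate the shrinking with depth increasing in $n$. Define canonically, for a non-empty open $U$, the non-empty open set $U^{*}=\{x\in U:d(x,U^{c})>1/t_U\}$, where $t_U$ is the least $m$ for which this set is non-empty; note $\overline{U^{*}}\subseteq U$. Then put $F_n=\bigcup_{i\geq n}U_i^{(n)}$, where $U_i^{(n)}$ is the $n$-fold application of $*$ to $U_i$. Now $\overline{U_i^{(n+1)}}\subseteq U_i^{(n)}\subseteq F_n$ for every $i\geq n+1$, so local finiteness yields $\overline{F_{n+1}}\subseteq F_n$, while pairwise disjointness still forces $\bigcap_n F_n=\emptyset$ (a point of the intersection would lie in some $U_i$, yet $F_{i+1}\cap U_i=\emptyset$), contradicting countable subcompactness via Theorem \ref{s0}. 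Note also that your proposed canonical shrinking, ``the ball centered at a distinguished point,'' is itself a choice problem: selecting one center from each $U_k$ is an instance of countable choice, and neither pairwise disjointness nor the indexing by $\mathbb{N}$ removes it. The sublevel sets of the distance function $d(\cdot,U^{c})$, with the minimal-index trick for non-emptiness, are what make the construction genuinely $\mathbf{ZF}$-definable.
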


\begin{proof}
Fix a metric $d$ on $X$ with $T=T_{d}$. It suffices, in view of Theorem \ref%
{s2}, to show (i) $\leftrightarrow $ (ii).\smallskip\ 

(i) $\rightarrow $ (ii) Assume the contrary and fix a countable, pairwise
disjoint, locally finite family $\mathcal{U}_{0}=\{U_{0i}:i\in \mathbb{N}\}$
of open subsets of $\mathbf{X}$. We are going to construct inductively
families $\mathcal{U}_{n}=\{U_{ni}:i\geq n\},n\in \mathbb{N}$ of open sets
of $\mathbf{X}$ such that for all $i\geq n,\overline{U_{ni}}\subseteq
U_{(n-1)i}$. For every non-empty open set $U$ of $\mathbf{X}$ let 
\begin{equation}
t_{U}=\min \{n\in \mathbb{N}:\{x\in U:d(x,U^{c})>1/n\}\neq \emptyset \}\text{%
.}  \label{1}
\end{equation}%
We begin the induction by letting for $n=1$ and every $i\in \mathbb{N}$, 
\[
U_{1i}=\{x\in U_{0i}:d(x,U_{0i}^{c})>1/t_{U_{0i}}\}\text{.} 
\]%
Since $\mathcal{U}_{0}$ is locally finite, it follows that $\mathcal{U}%
_{1}=\{U_{1i}:i\geq 1\}$ is a locally finite family of open sets of $\mathbf{%
X}$. Furthermore, for all $i\geq 1,\overline{U_{1i}}\subseteq U_{0i}$.

For $n=k+1$, use the induction hypothesis on $\mathcal{U}_{k}=\{U_{ki}:i\geq
k\}$ and define for every $i\geq n$, 
\[
U_{ni}=\{x\in U_{ki}:d(x,U_{ki}^{c})>1/t_{U_{ki}}\}\text{.} 
\]%
Clearly, $\mathcal{U}_{n}=\{U_{ni}:i\geq n\}$ is a locally finite family of
open sets of $\mathbf{X}$, and for all $i\geq n,\overline{U_{ni}}\subseteq
U_{ki}$ terminating the induction.\smallskip

Let $\dbigcup \mathcal{U}_{0}=F_{0}$ and for every $n\in \mathbb{N}$ put $%
F_{n}=\dbigcup \{U_{ni}:i\geq n\}$. Clearly, 
\[
\overline{F_{1}}=\overline{\dbigcup \{U_{1i}:i\geq 1\}}=\dbigcup \{\overline{%
U_{1i}}:i\geq 1\}\subseteq F_{0},
\]%
and for all $n>1$, 
\[
\overline{F_{n}}=\overline{\dbigcup \{U_{ni}:i\geq n\}}=\dbigcup \{\overline{%
U_{ni}}:i\geq n\}\subseteq \dbigcup \{U_{(n-1)i}:i\geq n-1\}=F_{n-1}\text{.}
\]%
Hence, $\mathcal{F}=\{F_{n}:n\in \omega \}$ is a regular $T_{d}$-sequence.
Thus, by the subcompactness of $\mathbf{X}$ with respect to $T_{d}$, and
Theorem \ref{s0}, it follows that $\dbigcap \mathcal{F}\neq \emptyset $.
Since $\mathcal{U}_{0}$ is pairwise disjoint and $F_{0}\in \mathcal{F}$, it
follows that for every $x\in \dbigcap \mathcal{F},x\in U_{0i}$ for some $%
i\in \mathbb{N}$. Since $x\notin F_{i+1}$, we conclude that $x\notin
\dbigcap \mathcal{F}$. Contradiction! Thus, $\mathcal{U}_{0}$ is finite and $%
\mathbf{X}$ is countably lightly compact as required.\smallskip 

(ii) $\rightarrow $ (i) Fix a countable regular filterbase $\mathcal{F}$ of
open subsets of $\mathbf{X}$. By our hypothesis and Theorem \ref{s1}, it
follows that $\dbigcap \{\overline{F}:F\in \mathcal{F}\}\neq \emptyset $.
Since $\mathcal{F}$ is regular it follows that $\dbigcap \mathcal{F}\neq
\emptyset $, and $\mathbf{X}$ is subcompact as required.\smallskip
\end{proof}

As a corollary to Theorem \ref{t1} we get:

\begin{corollary}
\label{c2} $(\mathbf{ZF})$ A second countable metrizable space $\mathbf{X}%
=(X,T)$ is compact iff it is countably subcompact with respect to $T$.
\end{corollary}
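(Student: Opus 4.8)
The plan is to derive Corollary \ref{c2} from Theorem \ref{t1} together with the classical fact that, in $\mathbf{ZF}$, a second countable metrizable space is compact iff it is countably compact. The right-to-left direction is the substantive one: assuming $\mathbf{X}$ is countably subcompact with respect to $T$, Theorem \ref{t1} immediately gives that $\mathbf{X}$ is countably compact. So it remains to upgrade countable compactness to full compactness using second countability. The left-to-right direction is essentially trivial: a compact metrizable space is countably compact, hence by Theorem \ref{t1} it is countably subcompact with respect to $T$.

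First I would recall that if $\mathcal{B}$ is a countable base witnessing second countability, then any open cover $\mathcal{U}$ of $\mathbf{X}$ can be refined to a countable subcover drawn from $\mathcal{B}$. The point is that this refinement requires no choice: enumerate $\mathcal{B}=\{B_n:n\in\mathbb{N}\}$ and let $\mathcal{B}'=\{B_n: B_n\subseteq U \text{ for some } U\in\mathcal{U}\}$. Then $\mathcal{B}'$ is a countable open cover of $\mathbf{X}$, since for each $x\in X$ there is $U\in\mathcal{U}$ containing $x$ and, as $\mathcal{B}$ is a base, some $B_n$ with $x\in B_n\subseteq U$. Now apply countable compactness to the countable cover $\mathcal{B}'$ to extract a finite subcover $\{B_{n_1},\dots,B_{n_k}\}$. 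For each chosen $B_{n_j}$ pick (by finite choice, which is provable in $\mathbf{ZF}$) some $U_j\in\mathcal{U}$ with $B_{n_j}\subseteq U_j$; then $\{U_1,\dots,U_k\}$ is a finite subcover of $\mathcal{U}$, establishing compactness.

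I expect the main point requiring care to be the confirmation that the passage from the countable base to a countable subcover, and thence to a finite subcover of the original cover, uses only finitely many choices and therefore goes through in $\mathbf{ZF}$ without any fragment of $\mathbf{AC}$. The enumeration of $\mathcal{B}$ is given by second countability (a base together with a fixed surjection from $\mathbb{N}$), so selecting the relevant $B_n$'s is canonical, and the final selection of the $U_j$'s is over a finite index set. With this in hand, the corollary follows by chaining: countable subcompactness $\Leftrightarrow$ countable compactness (Theorem \ref{t1}), and countable compactness $\Leftrightarrow$ compactness for second countable metrizable spaces, yielding the stated equivalence.
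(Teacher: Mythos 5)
Your proposal is correct and follows essentially the same route as the paper: both directions reduce, via Theorem \ref{t1}, to the fact that for second countable metrizable spaces countable compactness already implies compactness in $\mathbf{ZF}$. The only difference is that the paper disposes of that step by citing Corollary 20 of \cite{kkerem}, whereas you prove it inline with the standard choice-free argument (canonical refinement of an arbitrary open cover to a countable cover from the enumerated base, then countable compactness, then finitely many choices), which is a valid $\mathbf{ZF}$ proof.
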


\begin{proof}
It suffices, in view of Theorem \ref{t1}, to show that a countably compact
metric space is compact. This follows at once from Corollary 20 in \cite%
{kkerem}.\medskip
\end{proof}

Taking into consideration Theorem \ref{t1}, one may ask whether the
statement \textquotedblleft every countably compact metrizable space is
subcompact with respect to the base of all open sets\textquotedblright\ is a
theorem of $\mathbf{ZF}$. We observe next that this is not the case by
establishing that it implies $\mathbf{IWDI}$, a statement whose negation is
known to be consistent with $\mathbf{ZF}$, see e.g., Form 82 in \cite{hr}.

\begin{theorem}
\label{t4}The proposition \textquotedblleft every countably compact
metrizable space is subcompact with respect to the base of all open
sets\textquotedblright\ implies $\mathbf{IWDI}$.
\end{theorem}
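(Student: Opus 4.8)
The plan is to prove the contrapositive: assuming $\neg\mathbf{IWDI}$, I will produce a countably compact metrizable space that is not subcompact with respect to its base of all open sets, contradicting the proposition. So fix an infinite, weakly Dedekind finite set $X$; that is, $\mathcal{P}(X)$ has no countably infinite subset. Equip $X$ with the discrete metric $d$ (so $d(x,y)=1$ whenever $x\neq y$). Then $T_d$ is the discrete topology, $\mathbf{X}=(X,T_d)$ is metrizable (hence $T_3$), and every subset of $X$ is clopen.

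First I would verify that $\mathbf{X}$ is countably compact. By Theorem \ref{s2} it suffices to show $\mathbf{X}$ is countably lightly compact, and by Theorem \ref{s1} this amounts to ruling out a countably infinite, pairwise disjoint, locally finite family of open sets. In a discrete space any pairwise disjoint family of nonempty sets is automatically locally finite (the singleton neighborhood of a point meets at most one member) and consists of open sets, so it is enough to observe that $X$ admits no countably infinite pairwise disjoint family of nonempty subsets: such a family would be a countably infinite subset of $\mathcal{P}(X)\setminus\{\emptyset\}$, contradicting the weak Dedekind finiteness of $X$. Hence no family forbidden by Theorem \ref{s1} exists, and $\mathbf{X}$ is countably compact.

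Next I would show $\mathbf{X}$ is not subcompact relative to the base $T_d$ of all open sets. Let $\mathcal{F}$ be the family of all cofinite subsets of $X$. Since $X$ is infinite, each member of $\mathcal{F}$ is nonempty, and the intersection of two cofinite sets is cofinite, so $\mathcal{F}$ is an open filterbase. Because every open set is closed in $\mathbf{X}$, we have $\overline{F}=F$ for each $F\in\mathcal{F}$, so taking $B=F$ shows $\mathcal{F}$ is a regular $T_d$-filterbase. Finally $\bigcap\mathcal{F}=\emptyset$, as every $x\in X$ is omitted by the cofinite set $X\setminus\{x\}\in\mathcal{F}$. Thus $\mathbf{X}$ is a countably compact metrizable space possessing a regular $T_d$-filterbase with empty intersection, i.e. it is not subcompact with respect to $T_d$; this contradicts the hypothesised proposition and yields $\mathbf{IWDI}$.

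The construction is short, so the only point requiring care—and the crux of the argument—is the choice of hypothesis on $X$. Weak Dedekind finiteness is exactly what is needed: it is precisely the failure of a countably infinite pairwise disjoint family of nonempty subsets, which is what Theorem \ref{s1} translates countable light compactness into for a discrete space. Mere Dedekind finiteness would not suffice, since a Dedekind-finite set may still carry such a partition (for instance a countable union of pairs with no choice function), and its discrete space would then fail to be countably compact. I must therefore be careful to use the filterbase/locally-finite characterisation of countable compactness supplied by Theorems \ref{s1} and \ref{s2}, rather than sequential compactness, with which it need not coincide in $\mathbf{ZF}$.
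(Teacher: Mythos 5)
Your proof is correct and takes essentially the same approach as the paper: the discrete metric on an infinite weakly Dedekind-finite set, with the cofinite sets forming a regular $T_d$-filterbase (every set is clopen) that has empty intersection. The only cosmetic difference is that you verify countable compactness via Theorems \ref{s1} and \ref{s2}, whereas the paper observes directly that weak Dedekind finiteness forbids any denumerable open cover at all, so the space is trivially countably compact.
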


\begin{proof}
Assume the contrary and let $X$ be an infinite weakly Dedekind-finite set.
Let $d$ be the discrete metric on $X$. By our hypothesis, $\mathbf{X}$ has
no denumerable open covers. So, $\mathbf{X}$ is trivially countably compact.
Therefore, by our hypothesis, $\mathbf{X}$ is subcompact with respect to $%
T_{d}$. Since the family $\mathcal{F}$ of all cofinite subsets of $\mathbf{X}
$ is trivially a regular filter with $\dbigcap \mathcal{F}=\emptyset $, we
arrive at a contradiction. Hence, $\mathbf{IWDI}$ holds true as
required.\medskip\ 
\end{proof}

Next we answer Question 1 in the affirmative.

\begin{theorem}
\label{t9}$\mathbf{(ZF)}$ Let $\mathbf{X}=(X,T)$ be a metrizable space. The
following are equivalent:\newline
(i) $\mathbf{X}$ is compact;\newline
(ii) $\mathbf{X}$ has no total filters; \newline
(iii) $\mathbf{X}$ is subcompact relative to $T$.
\end{theorem}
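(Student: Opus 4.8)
The plan is to prove the three equivalences $(i)\leftrightarrow(ii)\leftrightarrow(iii)$ for a metrizable space $\mathbf{X}=(X,T)$, working entirely in $\mathbf{ZF}$. The equivalence $(i)\leftrightarrow(ii)$ is already delivered by Theorem \ref{t8}, since a metrizable space is regular; so the real content is to close the loop through $(iii)$. The easy half, $(i)\rightarrow(iii)$, is just the remark made after $(A)$ in the introduction: a compact space is subcompact with respect to \emph{any} base, in particular with respect to the base of all open sets, and this implication uses no choice. I would dispatch it in one line.

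The substantive direction is $(iii)\rightarrow(i)$, or equivalently $(iii)\rightarrow(ii)$ given Theorem \ref{t8}. Here I would argue by contraposition: assuming $\mathbf{X}$ is \emph{not} compact, I would use Theorem \ref{t8} to produce a total filter $\mathcal{F}$, and then leverage totality together with regularity (metrizability) to build a \emph{regular} filterbase $\mathcal{G}\subseteq T$ with $\dbigcap\{G:G\in\mathcal{G}\}=\emptyset$, thereby contradicting subcompactness relative to $T$. The key observation is that if $\mathcal{F}$ is total, then for each $x\in X$ there is a neighborhood $V_x$ of $x$ with $\overline{V_x}^{c}\in\mathcal{F}$, and in a metric space I can shrink $V_x$ to a suitable open ball, giving me the regularity condition ``for every $F\in\mathcal{G}$ there is $G\in\mathcal{G}$ with $\overline{G}\subseteq F$'' by iterating the metric separation one extra step. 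Since $\dbigcap\{\overline{F}:F\in\mathcal{F}\}=\emptyset$ for a total filter (Proposition \ref{p0}), and a regular filterbase inside it inherits empty intersection, subcompactness with base $=T$ is violated.

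The main obstacle I anticipate is producing the regular $T$-filterbase \emph{without invoking any choice}, since the whole point of the theorem is its $\mathbf{ZF}$ status. A total filter gives, for each point, \emph{some} separating neighborhood, and naively selecting one per point is exactly where $\mathbf{CAC}$ or worse would sneak in. The way around this is to avoid pointwise selection altogether: rather than choosing neighborhoods, I would work directly with the already-given filter $\mathcal{F}$, which is a \emph{set} of open sets, and use the metric to define, canonically, the ``thickening/shrinking'' operation $F\mapsto\{x\in F:d(x,F^{c})>1/n\}$ for the least admissible $n$ — precisely the device $t_U$ from the proof of Theorem \ref{t1}. Because this assignment is a definable function of $F$, no choice is needed, and it manufactures for each $F\in\mathcal{F}$ an open set $F'$ with $\overline{F'}\subseteq F$ while keeping $F'$ in the filter (using that $\mathcal{F}$ is closed under supersets and that $F'$ is a nonempty superset-cofinal shrinking). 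Verifying that the resulting family is genuinely a regular filterbase contained in $\mathcal{F}$, and that its intersection is still empty, is the calculation I would then carry out; it is routine once the canonical shrinking is in place.

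\begin{proof}
Since $\mathbf{X}$ is metrizable it is regular, so $(i)\leftrightarrow(ii)$ is exactly Theorem \ref{t8}. It remains to establish $(i)\leftrightarrow(iii)$.

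$(i)\rightarrow(iii)$. If $\mathbf{X}$ is compact and $\mathcal{F}\subseteq T$ is any regular filterbase with respect to the base $T$ of all open sets, then $\{\overline{F}:F\in\mathcal{F}\}$ is a family of closed sets with the finite intersection property, whence $\dbigcap\{\overline{F}:F\in\mathcal{F}\}\neq\emptyset$ by compactness. Regularity of $\mathcal{F}$ gives $\dbigcap\{F:F\in\mathcal{F}\}=\dbigcap\{\overline{F}:F\in\mathcal{F}\}\neq\emptyset$, so $T$ is a subcompact base and $\mathbf{X}$ is subcompact relative to $T$.

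$(iii)\rightarrow(i)$. We argue by contraposition, using Theorem \ref{t8}. Suppose $\mathbf{X}$ is not compact and fix a metric $d$ on $X$ with $T=T_{d}$. By Theorem \ref{t8} there is a total filter $\mathcal{F}$ of $\mathbf{X}$, and by Proposition \ref{p0} we have $\dbigcap\{\overline{F}:F\in\mathcal{F}\}=\emptyset$. For each nonempty open $F$ let
\[
t_{F}=\min\{n\in\mathbb{N}:\{x\in F:d(x,F^{c})>1/n\}\neq\emptyset\}
\]
and set $F'=\{x\in F:d(x,F^{c})>1/t_{F}\}$, a nonempty open set with $\overline{F'}\subseteq F$. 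The assignment $F\mapsto F'$ is a definable function, so the family
\[
\mathcal{G}=\{F':F\in\mathcal{F}\}
\]
is obtained without any appeal to choice. Since $F'\subseteq F$ and $\mathcal{F}$ is a filter, each $F'$ together with its successive shrinkings remains cofinal in $\mathcal{F}$, so $\mathcal{G}$ is a filterbase contained in $T$; and for every $F'\in\mathcal{G}$ the set $(F')'\in\mathcal{G}$ satisfies $\overline{(F')'}\subseteq F'$, so $\mathcal{G}$ is a regular filterbase with respect to $T$. Finally, $F'\subseteq\overline{F'}\subseteq\overline{F}$ for each $F$, whence
\[
\dbigcap\{G:G\in\mathcal{G}\}\subseteq\dbigcap\{\overline{F}:F\in\mathcal{F}\}=\emptyset.
\]
Thus $T$ is not a subcompact base, and since every base refining a subcompact base would have to witness subcompactness, $\mathbf{X}$ is not subcompact relative to $T$. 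This proves $(iii)\rightarrow(i)$ and completes the proof.
\end{proof}
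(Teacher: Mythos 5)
Your reduction of $(i)\leftrightarrow(ii)$ to Theorem \ref{t8} and your proof of $(i)\rightarrow(iii)$ are both correct and choice-free, and your overall cycle is sound in structure. The gap is in the substantive direction $(iii)\rightarrow(i)$: the family $\mathcal{G}=\{F':F\in\mathcal{F}\}$ obtained by the canonical shrinking $F\mapsto F'=\{x\in F:d(x,F^{c})>1/t_{F}\}$ is in general \emph{not} a filterbase, let alone a regular one. The trouble is that $t_{F}$ is a global quantity: it is the \emph{coarsest} scale at which $F$ has deep points, so $F'$ concentrates near the ``fattest'' part of $F$, and different members of the filter shrink toward different regions. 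Concretely, let $X=(0,1)$ with the usual metric and let $\mathcal{F}$ be the total filter of all open $U\subseteq(0,1)$ containing some interval $(0,\varepsilon)$ (totality: for $x\in X$ take $V=(x/2,1)$, so that $\overline{V}^{c}=(0,x/2)\in\mathcal{F}$). For $F_{1}=(0,9/10)$ one computes $t_{F_{1}}=2$ and $F_{1}'=(0,2/5)$, while for $F_{2}=(0,1/10)\cup(1/2,9/10)$ one computes $t_{F_{2}}=6$ (no point of the thin piece $(0,1/10)$ is at distance $>1/6$ from $F_{2}^{c}$) and $F_{2}'=(2/3,11/15)$. Both $F_{1},F_{2}$ lie in $\mathcal{F}$, yet $F_{1}'\cap F_{2}'=\emptyset$; since filterbase members are non-empty by definition, $\mathcal{G}$ cannot be a filterbase. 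Your verification of regularity has a second, independent flaw: for $(F')'$ to belong to $\mathcal{G}$ you need $F'\in\mathcal{F}$, but filters are closed under \emph{supersets}, not subsets, so your parenthetical appeal to superset-closure runs backwards; in the example above $F_{2}'\notin\mathcal{F}$. (Also, your closing remark about ``every base refining a subcompact base'' is unnecessary: exhibiting a regular $T$-filterbase with empty intersection is literally the negation of (iii).)

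The paper closes this direction without any metric shrinking. Assuming $\mathbf{X}$ is not compact, it fixes a family $\mathcal{G}$ of closed sets with the fip, closed under finite intersections, with $\bigcap\mathcal{G}=\emptyset$, and forms $\mathcal{H}=\{U\in T:G\subseteq U\text{ for some }G\in\mathcal{G}\}$, i.e.\ \emph{all} open supersets of members of $\mathcal{G}$; this is a definable family, so no choice is used, it is automatically a filterbase, and it is regular because metrizable spaces are $T_{4}$ in $\mathbf{ZF}$: given $G\in\mathcal{G}$ with $G\subseteq F$, $F\in\mathcal{H}$, normality produces disjoint open $U\supseteq G$ and $V\supseteq F^{c}$, whence $G\subseteq U\subseteq\overline{U}\subseteq V^{c}\subseteq F$ and $U\in\mathcal{H}$. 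Subcompactness relative to $T$ then yields $x\in\bigcap\mathcal{H}$, and since $\{x\}^{c}\in\mathcal{H}$ whenever $x\notin G$ for some $G\in\mathcal{G}$, such an $x$ must lie in $\bigcap\mathcal{G}$, a contradiction. If you want to keep your route through total filters, the same repair works verbatim: from a total filter $\mathcal{F}$, Proposition \ref{p0} gives $\bigcap\{\overline{F}:F\in\mathcal{F}\}=\emptyset$, and you can run the normality argument on the closed family $\{\overline{F}:F\in\mathcal{F}\}$, replacing your $\mathcal{G}$ by the set of all open sets containing $\overline{F}$ for some $F\in\mathcal{F}$. The moral is that in this theorem regularity of the filterbase must come from normality (interpolation between a closed set and an open superset), not from the shrinking operator $F\mapsto F'$, which does not interact well with intersections.
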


\begin{proof}
Let $\mathbf{X}=(X,T)$ be a metrizable space and fix a metric $d$ on $X$
with $T=T_{d}$.

(i) $\rightarrow $ (ii) This follows from Theorem \ref{t8} and the fact that
metrizable spaces are regular.\smallskip

(ii) $\rightarrow $ (iii) Fix a regular filterbase $\mathcal{F}$ of $\mathbf{%
X}$. For our convenience we may assume that $\mathcal{F}$ is also an open
filter of $\mathbf{X}$. We show that $\dbigcap \mathcal{F\neq \emptyset }$.
Assume the contrary and let $\dbigcap \mathcal{F=\emptyset }$. We claim that 
$\mathcal{F}$ is a total filter. Since $\mathcal{F}$ is regular, 
\begin{equation}
\dbigcap \mathcal{F}=\dbigcap \{\overline{F}:F\in \mathcal{F}\}\text{.}
\label{10}
\end{equation}%
Since $\mathbf{X}$ is regular, by Proposition \ref{p0} and (\ref{10}), it
follows that $\mathcal{F}$ is a total filter. Contradiction!\smallskip 

(iii) $\rightarrow $ (i) Assume the contrary and fix a family $\mathcal{G}$
of closed subsets of $\mathbf{X}$ with the fip such that $\dbigcap \mathcal{G%
}=\emptyset $. Without loss of generality we may assume that $\mathcal{G}$
is closed under finite intersections. Let 
\[
\mathcal{F}=\{U\in T:\text{for some }G\in \mathcal{G},G\subseteq U\}\text{.} 
\]%
We claim that $\mathcal{F}$ is a regular filterbase of $\mathbf{X}$. To see
that $\mathcal{F}$ is a filterbase of $\mathbf{X}$, fix $F_{1},F_{2}\in 
\mathcal{F}$ and let $G_{1},G_{2}\in \mathcal{G}$ satisfy $G_{1}\subseteq
F_{1}$ and $G_{2}\subseteq F_{2}$. Since $G_{1}\cap G_{2}\in \mathcal{G}$
and $G_{1}\cap G_{2}\subseteq F_{1}\cap F_{2}$, it follows that $F_{1}\cap
F_{2}\in \mathcal{F}$. To see that $\mathcal{F}$ is a regular filterbase,
fix $F\in \mathcal{F}$ and let $G\in \mathcal{G}$ satisfy $G\subseteq F$.
Since $\mathbf{X}$ is $T_{4}$ in $\mathbf{ZF}$, it follows that there exist
disjoint open sets $U,V$ of $\mathbf{X}$ with $G\subseteq U$ and $%
F^{c}\subseteq V$. It follows that $U\in \mathcal{F}$ and $V^{c}$ is closed.
Clearly, $G\subseteq U\subseteq \overline{U}\subseteq V^{c}\subseteq F$.
Hence, $\mathcal{F}$ is regular filterbase.

Fix, by our hypothesis, $x\in \dbigcap \mathcal{F}$. If $x\notin \dbigcap 
\mathcal{G}$ then for some $G\in \mathcal{G}$, $x\notin G$. Hence, $%
G\subseteq \{x\}^{c}$ and $\{x\}^{c}\in \mathcal{F}$, meaning that $x\notin
\dbigcap \mathcal{F}$. Contradiction! Thus, $x\in \dbigcap \mathcal{G}$ and $%
\mathbf{X}$ is compact as required.\medskip 
\end{proof}

In $\mathbf{ZFC}$, a subcompact metrizable space need not be countably
compact, hence by Theorem \ref{t1}, not countably subcompact with respect to
the base of all open sets. Indeed, $\mathbb{N}$ with the discrete metric is
subcompact ($\mathcal{B}=\{\{n\}:n\in \mathbb{N}\}$ is a subcompact base)
but $\mathbb{N}$ is not countably compact. Next we show, in $\mathbf{ZF}$,
that every subcompact metrizable space is completely metrizable and, every
completely metrizable space is countably subcompact.

\begin{theorem}
\label{t6} $(\mathbf{ZF})$ (i) Every subcompact metric space is a $G_{\delta
}$ set in any of its completions.\newline
In particular, every subcompact metric space is completely metrizable.%
\newline
(ii) Every completely metrizable space is countably subcompact.
\end{theorem}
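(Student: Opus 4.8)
The plan for (i) is to realise $X$ explicitly as a $G_{\delta }$ subset of a completion $\mathbf{Y}=(\overline{H(X)},\rho )$, using the given subcompact base $\mathcal{B}$; identify $X$ with the dense copy $H(X)\subseteq Y$. For $B\in \mathcal{B}$ put $\tilde{B}=Y\setminus \overline{X\setminus B}^{\,Y}$. This is open in $Y$, requires no choice to define, and one checks directly that $\tilde{B}\cap X=B$ and $\tilde{B}\subseteq \overline{B}^{\,Y}$. Setting $G_{n}=\bigcup \{\tilde{B}:B\in \mathcal{B},\ \delta (B)<1/n\}$, the small-diameter members of $\mathcal{B}$ cover $X$, so $G_{n}\cap X=X$ and hence $X\subseteq \bigcap_{n}G_{n}$. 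The real content is the reverse inclusion.

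For that, fix $y\in \bigcap_{n}G_{n}$ and let $\mathcal{F}_{y}=\{B\in \mathcal{B}:y\in \tilde{B}\}$, i.e. the $B$ for which some ball $B_{\rho }(y,r)$ satisfies $B_{\rho }(y,r)\cap X\subseteq B$; this family is defined outright, with no choice. The crux is that $\mathcal{F}_{y}$ is a regular $\mathcal{B}$-filterbase. Since $y\in \bigcap_{n}G_{n}$, it contains members of arbitrarily small diameter, and the controlling observation is: if $y\in \tilde{B'}$ and $\delta (B')<r$, then $y\in \overline{B'}^{\,Y}$ forces every point of $B'$ to lie within $\delta (B')$ of $y$, so $B'\subseteq B_{\rho }(y,r)\cap X$ and $\overline{B'}^{\,X}\subseteq B_{\rho }(y,r)\cap X$. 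Applying this with $B'$ of diameter below the radii attached to two given members yields both the filterbase property and regularity. Now subcompactness applied to $\mathcal{F}_{y}$ gives some $p\in \bigcap \mathcal{F}_{y}$; as $\mathcal{F}_{y}$ has members $B$ of diameter $<1/n$ with $y\in \overline{B}^{\,Y}$ and $p\in B\subseteq \overline{B}^{\,Y}$, we get $\rho (p,y)<1/n$ for every $n$, whence $y=p\in X$. Thus $X=\bigcap_{n}G_{n}$ is $G_{\delta }$ in $Y$, and the ``in particular'' is immediate from Theorem \ref{s0000}, $Y$ being completely metrizable. I expect the verification that $\mathcal{F}_{y}$ is a regular filterbase to be the main obstacle: it is exactly the step that must replace, choice-freely, the dependent choice used in the classical ($\mathbf{ZFC}$) argument.

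For (ii), let $d$ be a complete metric for $X$. By Theorem \ref{s0} it suffices to find a base $\mathcal{B}$ for which every regular $\mathcal{B}$-sequence $F_{1}\supseteq F_{2}\supseteq \cdots $ (with $\overline{F_{n+1}}\subseteq F_{n}$) has $\bigcap_{n}F_{n}\neq \emptyset $, and to obtain the witnessing point as the limit of a genuine Cauchy sequence, so as to remain within $\mathbf{ZF}$. Working with a base of balls, a regular $\mathcal{B}$-sequence carries centres $x_{n}$ and radii $r_{n}$; the plan is to force $\delta (F_{n})\rightarrow 0$ from the closure-nesting, whence $x_{m}\in F_{n}$ for $m\geq n$ makes $(x_{n})$ Cauchy, completeness gives $x_{n}\rightarrow p$, and $p\in \overline{F_{n}}\subseteq F_{n-1}$ for all $n$ yields $p\in \bigcap_{n}F_{n}$. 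The main obstacle here is the choice of base: one must arrange that the closure-nesting genuinely drives the diameters to zero, and the naive base of all open balls has to be refined, since a complete metric can carry a regular sequence of balls of non-vanishing diameter with empty intersection (for instance on a uniformly discrete space). Once the diameters are driven to $0$, reading the limit off the Cauchy sequence of centres and invoking the $\mathbf{ZF}$ definition of completeness (Cauchy sequences converge) finishes the argument with no appeal to countable choice.
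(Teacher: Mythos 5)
Your proof of part (i) is correct and is essentially the paper's argument: your canonical extension $\tilde{B}=Y\setminus \overline{X\setminus B}^{\,Y}$ is a definable substitute for the paper's family $\mathcal{U}=\{U\in T_{\sigma }:U\cap X\in \mathcal{B}\}$, and your sets $G_{n}$, the filterbase $\mathcal{F}_{y}$ of basic sets whose extensions contain $y$, the regularity check via small diameters, and the identification $\bigcap \mathcal{F}_{y}=\{y\}$ all run exactly as in the paper.

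Part (ii), however, has a genuine gap: the route you propose cannot be completed. You rightly note that the base of all open balls fails, but your remedy is to find a base $\mathcal{B}$ for which the closure-nesting $\overline{F_{n+1}}\subseteq F_{n}$ alone forces $\delta (F_{n})\rightarrow 0$, and your argument produces a point of $\bigcap_{n}F_{n}$ only under that hypothesis. No base of an arbitrary complete metric space has this property. Consider $X=\{0\}\cup \{1/n:n\in \mathbb{N}\}$ with the usual metric: it is complete, and every open set containing $0$ is infinite and clopen (its complement is a finite set of isolated points), hence of positive diameter. Any base $\mathcal{B}$ must contain such a set $C$, and the constant sequence $F_{n}=C$ is then a regular $\mathcal{B}$-sequence --- the definition demands only $\overline{F_{n+1}}\subseteq F_{n}$, which holds because $C$ is clopen --- whose diameters do not tend to $0$. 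So the non-vanishing-diameter case cannot be engineered away by refining the base; it must be handled by a separate argument, which your proposal does not contain. The paper confronts it directly: with its base built from radius-$1/n$ balls it splits into case (a), where the radii $1/k_{n}\rightarrow 0$ and your Cauchy-centres argument applies, and case (b), where the radii are eventually constant and the centre $x_{n_{0}}$, being a limit point of $X$, is shown to lie in $\overline{F_{n}}$ for all $n$, hence in $\bigcap \mathcal{F}$.

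There is a second, independent gap: you assert that a regular $\mathcal{B}$-sequence ``carries centres $x_{n}$ and radii $r_{n}$''. In $\mathbf{ZF}$ it does not: the $F_{n}$ are merely sets, a ball as a set may admit many centre--radius representations, and selecting one representation for each $n$ is an instance of countable choice. Making the centre definable from the set itself is precisely the purpose of the paper's decomposition $\mathcal{B}=\mathcal{B}_{1}\cup \mathcal{B}_{2}\cup \mathcal{B}_{3}$ (balls with a unique centre; multi-centred balls with one centre deleted, so that the deleted centre is recoverable; singletons at isolated points). Both devices --- the two-case analysis and the choice-free extraction of centres --- constitute the actual content of the paper's proof of (ii), and both are missing from your sketch.
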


\begin{proof}
(i)\ A straightforward modification of the proof given in \cite{dg} p. 763
shows that we can dispense with any use of $\mathbf{AC}$. For the reader's
convenience we supply all the details below. Fix a metric space $\mathbf{X}%
=(X,d)$ having a subcompact base $\mathcal{B}$ and let $\mathbf{Y}=(Y,\sigma
)$ be a completion of $\mathbf{X}$. Clearly, every $B\in \mathcal{B}$ can be
expressed as $X\cap O$ for some open set $O$ of $\mathbf{Y}$ such that $%
\delta (B)=\delta (O)$ ($B$ is dense in $O$). Let 
\[
\mathcal{U}=\{U\in T_{\sigma }:U\cap X=B\mathcal{\ }\text{\ for some }B\in 
\mathcal{B}\}\text{.}
\]%
For every $n\in \mathbb{N}$ let 
\[
\mathcal{U}_{n}=\{U\in \mathcal{U}:\delta (U)<1/n\}\text{,}
\]%
and $U_{n}=\dbigcup \mathcal{U}_{n}$. Clearly, for every $n\in \mathbb{N}%
,X\subseteq U_{n}$. We show that 
\[
X=\dbigcap \{U_{n}:n\in \mathbb{N}\}.
\]%
Since $\mathcal{B}$ is a base for $\mathbf{X}$, it follows that for every $%
x\in X$ and every $n\in \mathbb{N}$, there exists a $B\in \mathcal{B}$ such
that $x\in B$ and $\delta (B)<1/n$. Hence, there exists $U\in \mathcal{U}$
such that $x\in U$ and $\delta (U)<1/n$. Therefore, for every $n\in \mathbb{N%
},x\in U_{n}$. Hence, 
\begin{equation}
X\subseteq \dbigcap \{U_{n}:n\in \mathbb{N}\}\text{.}  \label{3}
\end{equation}%
To see the other direction of the inclusion, fix $y\in \dbigcap \{U_{n}:n\in 
\mathbb{N}\}$. Since for every $n\in \mathbb{N}$, $y\in U_{n}$ it follows
that there exists $U\in \mathcal{U}_{n}$ with $y\in U$. Hence, 
\begin{equation}
\dbigcap \{U\in \mathcal{U}:y\in U\}=\{y\}\text{.}  \label{5}
\end{equation}%
It is easy to see that 
\begin{equation}
\text{for every }U\in \mathcal{U}\text{ with }y\in U\text{ there exists }%
V\in \mathcal{U}\text{ with }y\in V,\overline{V}\subseteq U\text{.}
\label{6}
\end{equation}%
\ Let $\mathcal{F}=\{B\in \mathcal{B}:B=U\cap X$ for some $U\in \mathcal{U}$
with $y\in U\}$. We claim that $\mathcal{F}$ is a regular filterbase of $%
\mathcal{B}$. To see this, fix $B\in \mathcal{F}$ and let $U\in \mathcal{U}$
satisfy $y\in U$ and $B=U\cap X$. By (\ref{6}), there exists $V\in \mathcal{U%
}$ with $y\in V$ and $\overline{V}\subseteq U$. Let $B_{V}=V\cap X\in 
\mathcal{B}$. We have: 
\[
\overline{B_{V}}^{X}\subseteq \overline{B_{V}}=\overline{V}\subseteq U\text{.%
}
\]%
Therefore, for every $x\in \overline{B_{V}}^{X},x\in U\cap X=B$, meaning
that $\overline{B_{V}}^{X}\subseteq B$. Hence, $\mathcal{F}$ is a regular
filterbase of $\mathcal{B}$ as claimed. By the subcompactness of $\mathcal{B}
$ and (\ref{5}) we get 
\[
\emptyset \neq \dbigcap \mathcal{F}\subseteq \dbigcap \{U\in \mathcal{U}%
:y\in U\}=\{y\}\text{.}
\]%
Therefore, $\dbigcap \mathcal{F}=\{y\}$ and consequently $\dbigcap
\{U_{n}:n\in \mathbb{N}\}\subseteq X$. Hence, $X$ is a $G_{\delta }$ set in
the complete metric space $\mathbf{Y}$.\smallskip 

The second assertion follows at once from the first part and Theorem \ref%
{s0000}.\smallskip

(ii) Fix a complete metric space $\mathbf{X}=(X,d)$ and let $X_{1},X_{2}$ be
the sets of all limit and isolated points of $\mathbf{X}$ respectively. Let $%
\mathcal{B}=\mathcal{B}_{1}\cup \mathcal{B}_{2}\cup \mathcal{B}_{3}$, where 
\[
\mathcal{B}_{1}=\{B(x,1/n):n\in \mathbb{N},x\in X_{1}\text{\ and }B(x,1/n)%
\text{ has no other center }y\neq x\}, 
\]%
\[
\mathcal{B}_{2}=\{B(x,1/n)\backslash \{y\}:n\in \mathbb{N},x,y\in
X_{1},x\neq y,B(x,1/n)=B(y,1/n)\} 
\]%
and%
\[
\mathcal{B}_{3}=\{\{x\}:x\in X_{2}\}. 
\]%
It is straightforward to see that $\mathcal{B}$ is a base of $\mathbf{X}$.

We claim that $\mathcal{B}$ is countably subcompact. To this end fix, in
view of Theorem \ref{s0}, a regular $\mathcal{B}$\textit{-}sequence $%
\mathcal{F}=\{F_{n}:n\in \mathbb{N}\}$. If $F=\{x\}$ for some $F\in \mathcal{%
F}$, then $\dbigcap \mathcal{F}=F$. Assume that $\mathcal{F}\subseteq 
\mathcal{B}_{1}\cup \mathcal{B}_{2}$. For every $n\in \mathbb{N}$ define 
\[
x_{n}=\left\{ 
\begin{array}{c}
x,\text{ if }F_{n}=B(x,1/k_{n})\in \mathcal{B}_{1} \\ 
y\text{, if }F_{n}=B(t,1/k_{n})\backslash \{y\}\in \mathcal{B}_{2}%
\end{array}%
\right. . 
\]%
We consider the following two cases:

(a) $\lim_{n\rightarrow \infty }1/k_{n}=0$. In this case, it follows that $%
(x_{n})_{n\in \mathbb{N}}\ $is a Cauchy sequence of $\mathbf{X}$. Hence, by
the completeness of $\mathbf{X}$, $(x_{n})_{n\in \mathbb{N}}$ converges to
some $x\in X$. It is straightforward to see that $x\in \dbigcap \mathcal{F}$.

(b) $\lim_{n\rightarrow \infty }1/k_{n}\neq 0$. Since $F_{1}\supseteq
F_{2}\supseteq ,...$, it follows that $(1/k_{n})_{n\in \mathbb{N}}$ is a
decreasing sequence. Hence, for some $n_{0}\in \mathbb{N}$, $k_{n}=k_{n_{0}}$
for all $n\geq n_{0}$. We claim that for all $n\geq n_{0}$, $x_{n_{0}}\in 
\overline{F_{n}}$. To this end, fix $n>n_{0}$. Since, $F_{n}\subseteq
F_{n_{0}}$ it follows that $d(x_{n},x_{n_{0}})<1/k_{n}$. Since $x_{n_{0}}\in
X_{1},$ it follows that $x_{n_{0}}\in \overline{F_{n}}$. Therefore, $%
x_{n_{0}}\in \dbigcap \mathcal{F}$.

From cases (a)\ and (b)\ it follows that $\dbigcap \mathcal{F}\neq \emptyset 
$, and $\mathbf{X}$ is countably subcompact as required.\medskip
\end{proof}

In view of Theorem \ref{t1}, a metrizable countably subcompact with respect
to the base of all open sets is completely metrizable in $\mathbf{ZF}$. We
show next that this is not the case if we only assume the space to be
subcompact.

\begin{theorem}
\label{t3} Each of the following statements:\newline
(i) Every metrizable, countably subcompact topological space is completely
metrizable,\newline
(ii) every countably subcompact metrizable space is subcompact, \newline
(iii) every completely metrizable space is subcompact \newline
implies $\mathbf{IDI}(\mathbb{R})$.\newline
In particular, non of (i)-(iii) is a theorem of $\mathbf{ZF}$.
\end{theorem}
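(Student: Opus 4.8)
The plan is to argue by contraposition in all three cases: assuming $\mathbf{IDI}(\mathbb{R})$ fails, I fix an infinite Dedekind-finite set $A\subseteq\mathbb{R}$ and, invoking Theorem~\ref{s000}, replace it by a \emph{dense} Dedekind-finite set $D\subseteq\mathbb{R}$. All three counterexamples will be built from $D$, and everything is driven by one engine: \emph{every dense $G_\delta$ subset of $\mathbb{R}$ is Dedekind-infinite}. I also isolate a criterion for countable subcompactness: if a metric space $\mathbf{X}$ has a dense Dedekind-finite subset $D$, then the balls $B(x,r)$ with $x\in D$ form a countably subcompact base. Indeed these balls form a base by density of their centres, and in any regular $\mathcal{B}$-sequence $F_n=B(x_n,r_n)$ the centres $x_n$ lie in $D$; their range, a subset of a Dedekind-finite set, is finite, so some value $x^\ast$ recurs infinitely often and $x^\ast\in\bigcap_n F_n$ by nestedness. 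Theorem~\ref{s0} then yields countable subcompactness.

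For the engine, let $G=\bigcap_n U_n$ with each $U_n$ open and dense. I fix once and for all an enumeration $(J_k)_{k\in\mathbb{N}}$ of the open intervals with rational endpoints, which exists outright since $\mathbb{Q}$ is well-orderable. For each integer $m$ I run a recursion inside $(m,m+1)$: having chosen a rational interval $I_n$, I set $I_{n+1}=J_k$ for the \emph{least} $k$ with $\overline{J_k}\subseteq U_{n+1}\cap I_n$ and $\mathrm{length}(J_k)<2^{-n}$ (such $k$ exists because $U_{n+1}$ is dense). The nested intervals pin down a single point $g_m\in\bigcap_n U_n=G$ with $g_m\in(m,m+1)$, so $m\mapsto g_m$ injects $\mathbb{N}$ into $G$. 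Because each step selects a least index against the fixed enumeration, no choice is used.

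Now I assemble. First note that $(D,d)$ with the subspace metric is \emph{complete}: a Cauchy sequence in $D$ has finite range (an infinite range would be a countably infinite, hence Dedekind-infinite, subset of $D$), so it is eventually constant and converges in $D$. For $(iii)$ and $(ii)$ I use $\mathbf{X}=(D,d)$: it is completely metrizable by this remark and countably subcompact by the criterion, yet \emph{not} subcompact, since by Theorem~\ref{t6}(i) a subcompact metric space is a $G_\delta$ in any completion, and $\overline{D}=\mathbb{R}$ is one, so $D$ would be a dense $G_\delta$ in $\mathbb{R}$ and hence Dedekind-infinite by the engine -- contradiction. For $(i)$ I use $\mathbf{X}=D\cup\mathbb{Q}$: here $D$ is still dense and Dedekind-finite, so $\mathbf{X}$ is countably subcompact by the criterion; but it is \emph{not} completely metrizable, for otherwise the separable space $\mathbf{X}$ would be a $G_\delta$ in its completion $\mathbb{R}$, and intersecting with the (themselves $G_\delta$) irrationals would make $D\setminus\mathbb{Q}=\mathbf{X}\cap(\mathbb{R}\setminus\mathbb{Q})$ a dense $G_\delta$, again Dedekind-infinite by the engine, contradicting $D\setminus\mathbb{Q}\subseteq D$.

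I expect the main obstacle to be the engine: producing a countably infinite subset of a dense $G_\delta$ entirely within $\mathbf{ZF}$, which forces the least-index bookkeeping against the fixed enumeration of rational intervals in place of arbitrary selections. A secondary delicate point, needed only for $(i)$, is the $\mathbf{ZF}$ validity of ``completely metrizable $\Rightarrow$ $G_\delta$ in the completion''; this is the classical oscillation argument, and it stays choice-free precisely because $D\cup\mathbb{Q}$ is separable through the explicit dense set $\mathbb{Q}$.
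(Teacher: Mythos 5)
Your overall plan is sound and, for parts (ii) and (iii), actually simpler than the paper's argument, but there is one genuine gap: your auxiliary criterion (``a metric space with a dense Dedekind-finite subset $D$ is countably subcompact with respect to the base of $D$-centred balls'') is not a theorem of $\mathbf{ZF}$, and its proof fails at the first step. Writing ``in any regular $\mathcal{B}$-sequence $F_n=B(x_n,r_n)$ the centres $x_n$ lie in $D$'' presupposes a function $n\mapsto x_n$ selecting, for each $n$, a centre of the set $F_n$; since a ball, as a set, need not determine its centre, this is a countable choice from the non-empty sets $\{x\in D:\exists r>0\ B(x,r)=F_n\}$, and precisely this kind of choice can fail in the models at issue. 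In fact the criterion itself fails there: arrange $D\cap \mathbb{Q}=\emptyset $ (as the paper does), transfer $D$ by the continued-fraction homeomorphism of $\mathbb{R}\setminus \mathbb{Q}$ onto $\mathbb{N}^{\mathbb{N}}$ to a dense Dedekind-finite set $X\subseteq \mathbb{N}^{\mathbb{N}}$, and give $X$ the inherited ultrametric $d(u,v)=2^{-\min \{k:u_{k}\neq v_{k}\}}$. In an ultrametric space every point of a ball is a centre of it, so the $X$-centred balls are exactly the sets $\{z\in X:z\restriction n=s\}$ for finite strings $s$; choosing any $y\in \mathbb{N}^{\mathbb{N}}\setminus X$ (possible, since $X$ is Dedekind finite), the sets $F_{n}=\{z\in X:z\restriction n=y\restriction n\}$ are non-empty, clopen in $X$ and nested, hence form a regular $\mathcal{B}$-sequence, yet $\bigcap_{n}F_{n}=\emptyset $. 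Note that your own recurring-centre argument shows that no centre-selection function can exist for this sequence, which is exactly the point: density plus Dedekind finiteness of the set of centres is not enough in $\mathbf{ZF}$.

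The gap closes for the two spaces you actually use, and you must say so explicitly: if $X$ is dense in $\mathbb{R}$ with the Euclidean metric and $F=(x-r,x+r)\cap X$ with $x\in D$, $r>0$, then $\inf F=x-r$ and $\sup F=x+r$, so the centre $x=(\inf F+\sup F)/2$ and the radius are \emph{unique} and are recovered from the set $F$ with no choice at all. With centres selected canonically in this way, your argument is correct (the range of $n\mapsto x_{n}$ is a countable subset of $D$, hence finite, since an infinite image of $\mathbb{N}$ is Dedekind infinite via first occurrences), and the rest of your proof stands: the ``engine'' recursion is choice-free and correct, $(D,d)$ is complete, Theorem \ref{t6}(i) applies verbatim, and your $\mathbf{ZF}$ Lavrentiev argument for $D\cup \mathbb{Q}$ is legitimate because all witnesses can be taken canonically against the fixed enumeration of $\mathbb{Q}$ (a point worth writing out in full). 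Once repaired, your route is genuinely different from the paper's: the paper uses instead the base of intervals with both endpoints in $D$ (traced on $\mathbb{Q}$ for (i), on $D$ for (ii),(iii)), which is injectively indexed by pairs from $D\times D$ --- endpoints are again recovered as $\inf $ and $\sup $ --- and hence Dedekind finite, so countable subcompactness holds trivially and no recurring-centre argument is needed; and for (ii),(iii) the paper refutes ``$D$ is $G_{\delta }$ in $\mathbb{R}$'' by embedding $D$ as a closed subspace of $\mathbb{R}^{\omega }$ and invoking a choice-set theorem for the closed subsets of $\mathbb{R}^{\omega }$, whereas your engine reaches the same contradiction in a few elementary, self-contained lines.
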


\begin{proof}
Assume the contrary and fix an infinite Dedekind finite subset $D$ of $%
\mathbb{R}$. Without loss of generality we may assume that $D\cap \mathbb{Q}%
=\emptyset $. By Theorem \ref{s000} we may assume that $D$ is dense in $%
\mathbb{R}$.\smallskip\ 

(i) Clearly, 
\[
\mathcal{B}=\{(x,y)\cap \mathbb{Q}:x,y\in D,x<y\} 
\]%
is a base for $\mathbb{Q}$ endowed with the usual (Euclidean) metric $|.|$.
Since $D\times D$ is Dedekind finite, it follows that $\mathcal{B}$ has no
denumerable subsets. Thus, $\mathbb{Q}$ is trivially countably subcompact
with respect to $\mathcal{B}$. Hence, by our hypothesis $\mathbb{Q}$ is
completely metrizable. However, $\mathbb{Q}$ is not completely metrizable
(for every metric $d$ on $\mathbb{Q}$ with $T_{d}=T_{|.|}$, $\mathbb{Q}\ $%
can be expressed as a union of countably many closed nowhere dense sets.
Therefore, $\mathbb{Q}$ is not Baire. Since separable completely metrizable
spaces are Baire in $\mathbf{ZF}$, see e.g. \cite{bru}, it follows that $%
\mathbb{Q}$ is not completely metrizable). Contradiction!\smallskip

(ii), (iii) It is easy to see that 
\[
\mathcal{B}=\{(x,y)\cap D:x,y\in D,x<y\} 
\]%
is a base for the subspace $\mathbf{D}$ of $\mathbb{R}$ endowed with the
usual metric. Since $D$ is Dedekind finite, $\mathbf{D}$ is countably
subcompact (with respect to $\mathcal{B}$) and complete. Hence, by our
hypotheses in (ii)\ and (iii), $\mathbf{D}$ is subcompact. By Theorem \ref%
{t6} (i), $D$ is $G_{\delta }$ in its completion $\mathbb{R}$. Fix a family $%
\{U_{n}:n\in \mathbb{N}\}$ of open sets of $\mathbb{R}$ with 
\[
D=\dbigcap \{U_{n}:n\in \mathbb{N}\} 
\]%
and define a map $f:D\rightarrow \mathbb{R}^{\omega }$ by requiring: 
\[
f(x)=(x,1/d(x,U_{1}^{c}),1/d(x,U_{2}^{c}),...)\text{.} 
\]%
Clearly, for every $n\in \mathbb{N}$, the function $h_{n}:\mathbb{R}%
\rightarrow \mathbb{R}$, $h_{n}(x)=d(x,U_{n}^{c})$ is strictly positive on $%
U_{n}$ and continuous. Hence, $f$ is well defined, continuous and $1:1$.
Since, $f^{-1}:f(D)\rightarrow D$ coincides with the restriction of the
projection $\pi _{0}$ to $f(D),$ it follows that $f^{-1}$ is continuous.
Therefore, $f:D\rightarrow f(D)$ is a homeomorphism.

We claim that $f(D)$ is closed in $\mathbb{R}^{\omega }$. To this end, fix $%
y=(a_{0},a_{1},...,a_{n},...)\in \mathbb{R}^{\omega }\backslash f(D).$ We
consider the following cases:\smallskip

(a) $a_{0}\in D$. In this case there exists $k\in \mathbb{N}$, such that $%
1/d(a_{0},U_{k}^{c})\neq a_{k}$. Since $1/h_{k}$ is continuous, it follows
that for $\varepsilon =|1/d(a_{0},U_{k}^{c})-a_{k}|$ there is a $\delta >0$
such that for all $x\in (a_{0}-\delta ,a_{0}+\delta ),$%
\[
|1/d(a_{0},U_{k}^{c})-1/d(x,U_{k}^{c})|<\varepsilon /3. 
\]%
Therefore, 
\[
V=(a_{0}-\delta ,a_{0}+\delta )\times \mathbb{R}\times ,...,\times \mathbb{R}%
\times (a_{k}-\varepsilon /3,a_{k}+\varepsilon /3)\times \mathbb{R}\times
,... 
\]%
is a neighborhood of $y$ avoiding $f(D)$.\smallskip

(b) $a_{0}\notin D$. In this case there exists $k\in \mathbb{N}$, such that $%
a_{0}\in U_{k}^{c}$. Since, $\lim_{x\rightarrow
a_{0}}1/d(x,U_{k}^{c})=\infty $, it follows that for $M=|a_{k}|+1$, there
exists a $\delta >0$ such that for every $x\in (a_{0}-\delta ,a_{0}+\delta
),1/d(x,U_{k}^{c})>M$. Then, 
\[
U=(a_{0}-\delta ,a_{0}+\delta )\times \mathbb{R}\times ,...,\times \mathbb{R}%
\times (a_{k}-1/2,a_{k}+1/2)\times \mathbb{R}\times ,... 
\]%
is a neighborhood of $y$ included in $f(D)^{c}$.\smallskip

From (a) and (b) it follows that $f(D)$ is a closed subset of $\mathbb{R}%
^{\omega }$ as claimed.

In \cite{kw} it has been shown, in $\mathbf{ZF}$, that the family of all
non-emtpy closed subsets of $\mathbb{R}^{\omega }$ has a choice set. Since $%
\mathbb{R}^{\omega }$ is separable, hence second countable, it follows that
its subspace $f(D)$ is also second countable. Therefore $f(D),$ and
consequently $\mathbf{D}$, is separable, contradicting the fact that $D$ is
Dedekind finite.\smallskip

The second assertion follows from the fact that in Cohen's basic model $%
\mathcal{M}$1 in \cite{hr}, $\mathbf{IDI(}\mathbb{R}\mathbf{)}$
fails.\medskip
\end{proof}

\noindent \textbf{Remark 1}. One can easily adopt the proof of part (ii) of
Theorem \ref{t3} to get a $\mathbf{ZF}$ proof of Theorem \ref{s0000} ($%
\mathbb{R}^{\omega }$ is completely metrizable in $\mathbf{ZF}$).\medskip 

Next, we show that $\mathbf{AC}$ is equivalent to the assertion that
subcompactness is an invariant for the forming of topological products.

\begin{theorem}
\label{R2}The following are equivalent: (i) $\mathbf{AC}$;\newline
(ii) for every family $\{\mathbf{X}_{i}:i\in I\}$ of subcompact $T_{3}$
spaces, for every family $\{\mathcal{B}_{i}:i\in I\}$ such that for every $%
i\in I$, $\mathcal{B}_{i}$ is a subcompact base for $\mathbf{X}_{i}$, the
Tychonoff product $\mathbf{X}=\dprod\limits_{i\in I}\mathbf{X}_{i}$ is
subcompact with respect to the standard base $\mathcal{B}$ of $\mathbf{X}$
generated by the family $\{\mathcal{B}_{i}:i\in I\}$;\newline
(iii) for every family $\{\mathbf{X}_{i}:i\in I\}$ of metrizable subcompact
spaces, for every family $\{\mathcal{B}_{i}:i\in I\}$ such that for every $%
i\in I$, $\mathcal{B}_{i}$ is a subcompact base for $\mathbf{X}_{i}$, the
Tychonoff product $\mathbf{X}=\dprod\limits_{i\in I}\mathbf{X}_{i}$ is
subcompact with respect to the standard base $\mathcal{B}$ of $\mathbf{X}$
generated by the family $\{\mathcal{B}_{i}:i\in I\}$.
\end{theorem}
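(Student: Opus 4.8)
The plan is to prove the cycle $(i)\Rightarrow (ii)\Rightarrow (iii)\Rightarrow (i)$. The implication $(i)\Rightarrow (ii)$ is exactly the content of Theorem \ref{R1}, de Groot's $\mathbf{ZFC}$ product theorem, so under $\mathbf{AC}$ there is nothing to add. The implication $(ii)\Rightarrow (iii)$ is immediate: every metrizable space is $T_{3}$, hence a family of subcompact metrizable spaces with subcompact bases is in particular a family of subcompact $T_{3}$ spaces with subcompact bases, and $(ii)$ applies verbatim. All the work is therefore in $(iii)\Rightarrow (i)$, where I must manufacture, from an arbitrary family $\{A_{i}:i\in I\}$ of non-empty sets, a family of subcompact metrizable spaces whose product, being subcompact by $(iii)$, yields a choice function for $\{A_{i}\}$.

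For each $i$ I would build $\mathbf{X}_{i}$ as the metric disjoint union of one convergent-sequence space $K_{a}=\{a\}\cup \{(a,n):n\in \mathbb{N}\}$ per element $a\in A_{i}$ (with $d(a,(a,n))=1/n$, distances inside $K_{a}$ isometric to $\{0\}\cup \{1/n\}$, and all inter-component distances equal to $2$), together with one extra distinguished isolated point $c_{i}$ placed at distance $3$ from everything. This $\mathbf{X}_{i}$ is metrizable, and $c_{i}$ is a \emph{canonical} point of $\mathbf{X}_{i}$, nameable without any choice. As base I would take $\mathcal{B}_{i}$ consisting of the singletons $\{c_{i}\}$ and $\{(a,n)\}$ (all these points are isolated), the basic neighborhoods $W_{a,k}=\{a\}\cup \{(a,n):n>k\}$ of the limit points $a\in A_{i}$, and the canonical clopen sets $U_{k}=\bigcup_{a\in A_{i}}W_{a,k}=A_{i}\cup \{(a,n):n>k\}$. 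All members of $\mathcal{B}_{i}$ are clopen, so regularity of every $\mathcal{B}_{i}$-filterbase is automatic, and a short case analysis (a filterbase containing a singleton collapses to that singleton; a filterbase using some $W_{a,k}$ is confined to the single component $K_{a}$ and clusters at $a$; a filterbase of $U_{k}$'s has intersection $\supseteq A_{i}$) shows \emph{in} $\mathbf{ZF}$ that every regular $\mathcal{B}_{i}$-filterbase has non-empty intersection. Thus each $\mathbf{X}_{i}$ is subcompact with subcompact base $\mathcal{B}_{i}$, and the construction is rigged so that $\bigcap_{k}U_{k}=A_{i}$ while $c_{i}\notin U_{k}$ for every $k$.

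By $(iii)$ the product $\mathbf{X}=\prod_{i\in I}\mathbf{X}_{i}$ is subcompact with respect to the standard base $\mathcal{B}$ generated by $\{\mathcal{B}_{i}\}$. I would then consider the canonical family $\mathcal{F}$ of basic cylinders $C_{S,\vec{k}}=\{f\in X:f(i)\in U^{i}_{k_{i}}\text{ for all }i\in S\}$, where $S\subseteq I$ is finite, $k_{i}\in \mathbb{N}$, and $U^{i}_{k}$ denotes the set $U_{k}$ of $\mathbf{X}_{i}$. Each $C_{S,\vec{k}}$ is clopen, so $\mathcal{F}$ is automatically regular; $\mathcal{F}$ is directed because $U^{i}_{k}\cap U^{i}_{k'}=U^{i}_{\max (k,k')}$; and each $C_{S,\vec{k}}$ is non-empty in $\mathbf{ZF}$, since one places the canonical point $c_{i}$ on the cofinite set $I\setminus S$ and uses a mere finite choice on $S$. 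Hence $\mathcal{F}$ is a genuine regular $\mathcal{B}$-filterbase, and subcompactness forces $\bigcap \mathcal{F}\neq \emptyset$. As $f\in \bigcap \mathcal{F}$ holds iff $f(i)\in \bigcap_{k}U^{i}_{k}=A_{i}$ for every $i$ (use the one-coordinate cylinders $C_{\{i\},k}$), we obtain $\bigcap \mathcal{F}=\prod_{i\in I}A_{i}$, so $\prod_{i\in I}A_{i}\neq \emptyset$ and $\{A_{i}\}$ has a choice function. Since $\{A_{i}\}$ was arbitrary, $\mathbf{AC}$ follows.

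I expect the main obstacle to be exactly the non-vacuity issue that dictates the entire construction: the empty space is vacuously subcompact, so merely asserting that $\prod \mathbf{X}_{i}$ is subcompact says nothing unless the product provably carries points and a regular filterbase whose intersection tracks $\prod A_{i}$. This is why the distinguished canonical points $c_{i}$ are indispensable — they make the cylinders $C_{S,\vec{k}}$ non-empty without invoking choice, while the requirement $\bigcap_{k}U_{k}=A_{i}$ with $c_{i}$ excluded is what makes $\bigcap \mathcal{F}$ collapse precisely to $\prod A_{i}$ rather than to some harmless point such as $(c_{i})_{i\in I}$. The second delicate point is verifying entirely within $\mathbf{ZF}$ that each $\mathbf{X}_{i}$ really is subcompact for the base $\mathcal{B}_{i}$; keeping every base element clopen is the device that eliminates regularity from the discussion and reduces the verification to a finite, choice-free case analysis.
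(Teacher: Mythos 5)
Your proposal is correct and follows the paper's skeleton exactly: $(i)\rightarrow (ii)$ is Theorem \ref{R1}, $(ii)\rightarrow (iii)$ is the remark that metrizable spaces are $T_{3}$, and all the content is in $(iii)\rightarrow (i)$, where you use the same three-part mechanism as the paper (a canonical point per factor to make cylinders non-empty without choice, clopen base elements to trivialize regularity, and a filterbase of finite cylinders whose total intersection is $\prod_{i\in I}A_{i}$). The only real difference is the gadget: the paper's is much leaner. It takes $X_{i}=A_{i}\cup \{\infty \}$ with the \emph{discrete} metric and the base $\mathcal{B}_{i}=\{\{x\}:x\in X_{i}\}\cup \{A_{i}\}$; since every subset is clopen, regularity is automatic, a $\mathcal{B}_{i}$-filterbase either contains a singleton or equals $\{A_{i}\}$, so each $\mathcal{B}_{i}$ is subcompact in $\mathbf{ZF}$, and the regular $\mathcal{B}$-filterbase $\mathcal{F}=\{\bigcap Q:Q\text{ finite non-empty }\subseteq \{\pi _{i}^{-1}(A_{i}):i\in I\}\}$ has $\bigcap \mathcal{F}=\prod_{i\in I}A_{i}$ directly. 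Your convergent-sequence spaces and the descending clopen sets $U_{k}$ with $\bigcap_{k}U_{k}=A_{i}$ are needed only because in your space $A_{i}$ (the set of limit points) is not open; in the discrete space $A_{i}$ is itself clopen and can simply be placed in the base, which collapses your $U_{k}$-bookkeeping and most of your case analysis to one line. So your extra apparatus buys nothing here, though it is not wrong; one minor point in your favor is that you treat an arbitrary family $\{A_{i}:i\in I\}$ directly, whereas the paper first passes to a pairwise disjoint family (a standard, choice-free reduction it leaves implicit).
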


\begin{proof}
(i) $\rightarrow $ (ii) follows from Theorem \ref{R1} and (ii) $\rightarrow $
(iii) is straightforward.\smallskip

(iii) $\rightarrow $ (i) Fix $\mathcal{A}=\{A_{i}:i\in I\}$ a pairwise
disjoint family of non-empty sets and let $\infty $ be a set not in $%
\dbigcup \mathcal{A}$. For every $i\in I$, let $X_{i}=A_{i}\cup \{\infty \}$
carry the discrete metric. Clearly, for every $i\in I$, $\mathcal{B}%
_{i}=\{\{x\}:x\in X\}\cup \{A_{i}\}$ is a subcompact base for $\mathbf{X}%
_{i} $. By our hypothesis, the product $\mathbf{X}=\dprod\limits_{i\in I}%
\mathbf{X}_{i}$ is subcompact with respect to the standard base $\mathcal{B}$
of $\mathbf{X}$ generated by the family $\{\mathcal{B}_{i}:i\in I\}$. It is
straightforward to verify that 
\begin{equation}
\mathcal{F}=\{\dbigcap Q:Q\text{\ is a finite non-empty subset of }\{\pi
_{i}^{-1}(A_{i}):i\in I\}\}\text{.}  \label{4}
\end{equation}%
is a regular $\mathcal{B}$-filterbase of $\mathbf{X}$. Hence, by our
hypothesis, $\dbigcap \mathcal{F}\neq \emptyset $. Clearly, any element $%
f\in \dbigcap \mathcal{F}$ is a choice function of $\mathcal{A}$.\bigskip
\end{proof}

\noindent \textsc{Kyriakos Keremedis}\newline
Department of Mathematics\newline
University of the Aegean\newline
Karlovassi, Samos 83200, Greece \newline
\emph{E-mail}: kker@aegean.gr

\end{document}